\providecommand{\U}[1]{\protect\rule{.1in}{.1in}}
\newtheorem{theorem}{Theorem}
\theoremstyle{plain}
\newtheorem{corollary}{Corollary}
\newtheorem{definition}{Definition}
\newtheorem{example}{Example}
\newtheorem{lemma}{Lemma}
\newtheorem{proposition}{Proposition}
\newtheorem{remark}{Remark}
\numberwithin{equation}{section}
\begin{document}
\title[$\delta$-$n$-ideals of commutative rings]{$\delta$-$n$-ideals of commutative rings}
\author{Ece Yetkin Celikel}
\address{Department of Electrical Electronics Engineering, Faculty of Engineering,
Hasan Kalyoncu University, Gaziantep, Turkey.}
\email{ece.celikel@hku.edu.tr, yetkinece@gmail.com.}
\author{Gulsen Ulucak}
\address{Department of Mathematics, Faculty of Science, Gebze Technical University,
Gebze, Kocaeli, Turkey.}
\email{gulsenulucak@gtu.edu.tr.}
\thanks{This paper is in final form and no version of it will be submitted for
publication elsewhere.}
\subjclass[2000]{ Primary 13A15.}
\keywords{$\delta$-$n$-ideal, quasi $n$-ideal, $n$-ideal, $\delta$-primary ideal}

\begin{abstract}
Let $R$ be a commutative ring with nonzero identity, and $\delta
:\mathcal{I(R)}\rightarrow\mathcal{I(R)}$ be an ideal expansion where
$\mathcal{I(R)}$ the set of all ideals of $R$. In this paper, we introduce the
concept of $\delta$-$n$-ideals which is an extension of $n$-ideals in
commutative rings. We call a proper ideal $I$ of $R$ a $\delta$-$n$-ideal if
whenever $a,b\in R$ with$\ ab\in I$ and $a\notin\sqrt{0}$, then $b\in
\delta(I)$. For example, $\delta_{1}$ is defined by $\delta_{1}(I)=\sqrt{I}.$
A number of results and characterizations related to $\delta$-$n$-ideals are
given. Furthermore, we present some results related to quasi $n$-ideals which
is for the particular case $\delta=\delta_{1}.$

\end{abstract}
\maketitle


\section{Introduction}

Throughout this paper, we assume that all rings are commutative with non-zero
identity$.$ Since prime ideals have an important place in commutative algebra,
various generalizations of prime ideals have studied by many authors. D. Zhao
\cite{Zhao} introduced the concept of expansions of ideals and $\delta
$-primary ideals of commutative rings. Let $R$ be a ring. By $\mathcal{I(R)}$,
we denote the set of all ideals of $R.$ According to his paper, a function
$\delta:\mathcal{I(R)}\rightarrow\mathcal{I(R)}$ is an is an ideal expansion
if it assigns to each ideal $I$ of $R$ to another ideal $\delta(I)$ of the
same ring with the following properties: $I\subseteq\delta(I)$ and if
$I\subseteq J$ for some ideals $I,J$ of $R$, then $\delta(I)\subseteq$
$\delta(J).$ For example, $\delta_{0}$ is the identity function where
$\delta_{0}(I)=I$ for all ideal $I$ of $R,$ and $\delta_{1}$ is defined by
$\delta_{1}(I)=\sqrt{I}.$ For the other examples, consider the functions
$\delta_{+}$ and $\delta_{\ast}$ of $\mathcal{I(R)}$ defined with $\delta
_{+}(I)=I+J$ where $J\in\mathcal{I(R)}$ and $\delta_{\ast}(I)=(I:P)$ where
$P\in\mathcal{I(R)}$ for all $I\in\mathcal{I(R)}$, respectively. Recall from
\cite{Zhao} that an ideal expansion $\delta$ is said to be intersection
preserving if it satisfies $\delta(I\cap J)=\delta(I)\cap\delta(J)$ for any
ideals $I,J$ of $R$. He called a $\delta$-primary ideal $I$ of $R$ if $ab\in
I$ and $a\notin I$ for some $a,b\in R$ imply $b\in\delta(I)$. As a recent
study, \cite{Tekir}, authors defined the concept of $n$-ideals. A proper ideal
$I$ of $R$ is called $n$-ideal if whenever $a,b\in R$ and $ab\in I$, then
$a\in\sqrt{0}$ or $b\in I$.

The aim of this article is to introduce $\delta$-$n$-ideals which is an
extention of $n$-ideals of commutative rings and to give relations with some
classical ideals such as prime, $\delta$-primary, $n$-ideal. We call a proper
ideal $I$ of $R$ a $\delta$-$n$-ideal if whenever $a,b\in R$ with$\ ab\in I$
and $a\notin\sqrt{0}$, then $b\in\delta(I).$ In particular, if $\delta
=\delta_{1}$, then it is said to be a quasi $n$-ideal of $R.$ It is clear that
every $n$-ideal is a $\delta$-$n$-ideal for all ideal expansions $\delta.$ We
start with Example \ref{e3} is given to show that $\delta$-$n$-ideal and
$n$-ideal are different concepts. Also, a prime ideal needs not to be a
$\delta$-$n$-ideal (see Example \ref{e1}). Among many results in this paper,
in Proposition \ref{cx}, we obtain some certain conditions for a prime ideal
is to be a $\delta$-$n$-ideal. In Theorem \ref{t1}, we conclude equivalent
characterizations for $\delta$-$n$-ideals. In Theorem \ref{qu}, we discuss
rings of which every proper ideal is a $\delta$-$n$-ideal. We show in
Proposition \ref{max} that a maximal quasi $n$-ideal of $R$, is a prime ideal
of $R.$ In Proposition \ref{t4}, we show that an integral domain has no
nonzero $\delta$-$n$-ideal for expansion of ideals $\delta$ of $R$ with
$\delta(I)\neq R$ for all $I\in\mathcal{I(R)}.$ Also, it is shown in Theorem
\ref{von} that if $\delta(0)=0,$ then $R$ is a field if and only if $R$ is a
von Neumann regular ring and $\{0\}$ is a $\delta$-$n$-ideal. Furthermore, we
investigate $\delta$-$n$-ideals under various contexts of constructions such
as homomorphic images, direct products, localizations and in idealization
rings. (See Proposition \ref{tt}, \ref{loc}, Remark \ref{di}, and Proposition
\ref{i}).

For the sake of completeness, we give some definitions which we will need
throughout this study. For a proper ideal $I$ a ring $R$, $\sqrt{I}$ denotes
the radical of $I$ defined by $\{r\in R$ : there exists $n\in%
\mathbb{N}
$ with $r^{n}\in I\}$ and for $x\in R,$ by $(I:x),$ we denote the set of
$\{r\in R$ : $rx\in I\}$. Let $M$ be a unitary $R$-module. Recall that the
idealization $R(+)M=\{(r,m):r\in R,$ $m\in M$\} is a commutative ring with the
addition $(r_{1},m_{1})+(r_{2},m_{2})=(r_{1}+r_{2},m_{1}+m_{2})$ and
multiplication $(r_{1},m_{1})(r_{2},m_{2})=(r_{1}r_{2},r_{1}m_{2}+r_{2}m_{1})$
for all $r_{1},r_{2}\in R;\ m_{1},m_{2}\in M$. For an ideal $I$ of $R$ and a
submodule $N$ of $M$, it is well-known that $I(+)N$ is an ideal of $R(+)M$ if
and only if $IM\subseteq N$ \cite{AnWi} and \cite{Huc}. We recall also from
\cite{AnWi} that $\sqrt{I(+)N}=\sqrt{I}(+)M$. For the other notations and
terminologies that are used in this article, the reader is referred to
\cite{Huc}.

\section{Properties of $\delta$-$n$-ideals}

\begin{definition}
Given an expansion $\delta$ of ideals, a proper ideal $I$ of a ring $R$ is
called a $\delta$-$n$-ideal if whenever $a,b\in R$ and$\ ab\in I$ and
$a\notin\sqrt{0}$, then $b\in\delta(I).$
\end{definition}

It is clear that a proper ideal $I$ of $R$ is a $\delta_{0}$-$n$-ideal if and
only if $I$ is an $n$-ideal, and an $n$-ideal is a $\delta$-$n$-ideal.
However, the following example shows that the converse of this implication is
not true in general.

\begin{example}
\label{e3}Let $I=(x^{3})R_{1}$ be an ideal of $R_{1}=\mathbb{Z}_{4}[X]$. Let
$R=R_{1}/I$. Define the expansion function of $\mathcal{I(R)}$ with
$\delta(K)=K+\frac{(2,x)R_{1}}{I}$ and let $J=(x+1)R_{1}/I$. We show that $J$
is a $\delta$-$n$-ideal but not a $n$-ideal of $R$. Since $((x+1)+I)(1+I)\in
J$ but $((x+1)+I)\notin\sqrt{0_{R}}=\frac{(2,x)R_{1}}{I}$ and $(1+I)\notin J,$
$J$ is not an $n$-ideal of $R$. Note that $\delta(J)=\frac{(x+1)R_{1}}%
{I}+\frac{(2,x)R_{1}}{I}$. Thus $1+I\in\delta(J)$, that is, $\delta(J)=R$.
Thus $J$ is a $\delta$-$n$-ideal.
\end{example}

\begin{proposition}
\label{td} Let $\delta$ be an expansion of ideals of $R$ and $I$ a proper
ideal of $R$ with $\delta(I)\neq R.$ If $I$ is a $\delta$-$n$-ideal of $R$,
then $I\subseteq\sqrt{0}.$
\end{proposition}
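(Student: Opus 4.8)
The plan is to prove the contrapositive-flavored statement directly by an element argument. Let $a\in I$ be arbitrary; I want to show $a\in\sqrt{0}$. Suppose for contradiction that $a\notin\sqrt{0}$. Then write $a=a\cdot 1\in I$, and apply the defining property of a $\delta$-$n$-ideal to the factorization $a\cdot 1\in I$ with the first factor $a\notin\sqrt{0}$.

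This forces the second factor $1$ to lie in $\delta(I)$, whence $\delta(I)=R$, contradicting the hypothesis $\delta(I)\neq R$. Therefore no element of $I$ can avoid $\sqrt{0}$, i.e.\ $a\in\sqrt{0}$ for every $a\in I$, which is exactly $I\subseteq\sqrt{0}$.

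There is essentially no obstacle here: the only subtlety is remembering that $1\in R$ and that $1\notin\sqrt{0}$ (since $R$ has nonzero identity, $\sqrt{0}$ is a proper ideal), so the roles of the two factors in $a\cdot 1$ are the ones we want — $a$ plays the "non-nilpotent" role and $1$ plays the role of the element forced into $\delta(I)$. One should also note in passing that the hypothesis $\delta(I)\neq R$ is used in precisely one place, to derive the contradiction, and that no assumption on $\delta$ beyond it being an ideal expansion (in fact only $I\subseteq\delta(I)$ is implicitly relevant) is needed.

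<br>

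\begin{proof}
Let $a\in I$. Assume, contrary to what we want, that $a\notin\sqrt{0}$. Since $a=a\cdot 1\in I$ and $I$ is a $\delta$-$n$-ideal of $R$, the hypothesis $a\notin\sqrt{0}$ forces $1\in\delta(I)$, and hence $\delta(I)=R$, which contradicts our assumption that $\delta(I)\neq R$. Therefore $a\in\sqrt{0}$. Since $a\in I$ was arbitrary, we conclude $I\subseteq\sqrt{0}$.
\end{proof}
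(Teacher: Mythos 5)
Your proof is correct and is essentially identical to the paper's: both pick an element $a\in I\setminus\sqrt{0}$, write $a=a\cdot 1\in I$, and apply the $\delta$-$n$-ideal property to force $1\in\delta(I)$, contradicting $\delta(I)\neq R$. No issues.
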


\begin{proof}
Assume that $I\nsubseteq\sqrt{0}.$ Then there is an element $a\in R$ with
$a\in I\backslash\sqrt{0}.$ Since$\ a=a\cdot1\in I$ and $a\notin\sqrt{0}$, we
conclude $1\in\delta(I)$, a contradiction. Thus $I\subseteq\sqrt{0}.$
\end{proof}

Note that if the converse of Proposition \ref{td} is not satisfied in general.
For example, consider the ideal $I=\{0\}$ of $R=\mathbb{Z}_{6}.$ Put
$\delta=\delta_{0}$ or $\delta=\delta_{1}.$ Since $2\cdot3\in I$ but neither
$2\in\sqrt{0}$ nor $3\in\delta(I)$, $I$ is not a $\delta$-$n$-ideal of $R.$

In the following result, we clarify the relationships between $\delta$-primary
ideals and $\delta$-$n$-ideals.

\begin{proposition}
\label{p1} Let $I\subseteq\sqrt{0}$ be a proper ideal of a ring $R$ and
$\delta$ be an expansion of ideals of $R$. If $I$ is a $\delta$-primary ideal
of $R$, then $I$ is a $\delta$-$n$-ideal of $R$. The converse is also true if
$I=\sqrt{0}$.
\end{proposition}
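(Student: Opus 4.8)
The plan is to exploit the containment $I\subseteq\sqrt{0}$, which forces the condition "$a\notin\sqrt{0}$" to be stronger than (i.e.\ to imply) the condition "$a\notin I$". Concretely, for the forward implication I would start with arbitrary $a,b\in R$ satisfying $ab\in I$ and $a\notin\sqrt{0}$, and aim to show $b\in\delta(I)$. Since $I\subseteq\sqrt{0}$, the hypothesis $a\notin\sqrt{0}$ immediately yields $a\notin I$. Now $ab\in I$ together with $a\notin I$ is exactly the premise of the defining property of a $\delta$-primary ideal, so we conclude $b\in\delta(I)$, as required. This shows $I$ is a $\delta$-$n$-ideal.

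For the converse, assume in addition that $I=\sqrt{0}$ and that $I$ is a $\delta$-$n$-ideal; I would verify the $\delta$-primary property directly. Take $a,b\in R$ with $ab\in I$ and $a\notin I$. Because $I=\sqrt{0}$, the condition $a\notin I$ is literally the condition $a\notin\sqrt{0}$, so the $\delta$-$n$-ideal hypothesis applies and gives $b\in\delta(I)$. Hence $I$ is $\delta$-primary.

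There is no real obstacle here: the argument is purely a matter of unwinding the two definitions and observing that under $I\subseteq\sqrt{0}$ the negations $a\notin\sqrt 0$ and $a\notin I$ are related by implication (and coincide when $I=\sqrt 0$). The only point to be careful about is the direction of the containment $I\subseteq\sqrt0$ versus the direction of the implication between the two "escape" conditions — it is $a\notin\sqrt0\Rightarrow a\notin I$ that we use, not the reverse. I would also note explicitly that $I$ remains proper throughout, so both "$\delta$-primary" and "$\delta$-$n$-ideal" are meaningfully defined, and that $\delta(I)$ is the same object in both statements, so nothing needs to be translated.
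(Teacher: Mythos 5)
Your argument is correct and is essentially the paper's own proof: both directions reduce to observing that $I\subseteq\sqrt{0}$ makes $a\notin\sqrt{0}$ imply $a\notin I$, with the two conditions coinciding when $I=\sqrt{0}$. In fact your write-up is slightly more careful than the paper's (which elides the converse and contains a typo, concluding $a\in\delta(I)$ where $b\in\delta(I)$ is meant).
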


\begin{proof}
Suppose that $a,b\in R$ with $ab\in I$ and $a\notin\sqrt{0}$. Since $I$ is a
$\delta$-primary and clearly $a\notin I$, we have $a\in\delta(I),$ as needed.
In particular, it is clear that $\sqrt{0}$ is a $\delta$-primary ideal if and
only if $\sqrt{0}$ is a $\delta$-$n$-ideal.
\end{proof}

We show in the next example that a prime ideal needs not to be a $\delta$%
-$n$-ideal of $R$ in general.

\begin{example}
\label{e1} Let $\delta_{+}:\mathcal{I(R)}\rightarrow\mathcal{I(R)}$ be an
expansion of ideals of $R=\mathbb{Z}$ defined by $\delta_{+}(J)=J+q\mathbb{Z}$
where $q$ is prime integer with $(p,q)=1$. Consider the ideal $I=p\mathbb{Z}$
where $p$ is prime integer of the ring $R=\mathbb{Z}$. Then $I$ is a
$\delta_{+}$-$n$-ideal of $R$ that is neither $n$-ideal, $\delta_{0}$%
-$n$-ideal nor $\delta_{1}$-$n$-ideal of $R$. Indeed, $p\cdot1\in I$ but
$p\notin\sqrt{0}$ and $1\notin\delta_{0}(I)=I$ and also $1\notin\delta
_{1}(I)=\sqrt{I}$.
\end{example}

We justify the conditions for a prime ideal and $\delta$-primary is to be a
$\delta$-$n$-ideal of $R$ in the next result.

\begin{proposition}
\label{cx}Let $\delta$ be an expansion of ideals of $R.$ Then the following
are hold:
\end{proposition}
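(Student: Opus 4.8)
The plan is to enumerate the natural sufficient conditions under which a prime (or $\delta$-primary) ideal becomes a $\delta$-$n$-ideal, and to verify each one by combining Proposition~\ref{td}, Proposition~\ref{p1}, and the defining monotonicity of $\delta$. The key structural fact is Proposition~\ref{td}: if $\delta(I)\ne R$ then any $\delta$-$n$-ideal must lie inside $\sqrt 0$; so for a prime ideal $P$ with $\delta(P)\ne R$ the only way $P$ can be a $\delta$-$n$-ideal is $P\subseteq\sqrt 0$, which forces $P=\sqrt 0$ (since $\sqrt 0$ is the intersection of all primes, hence contained in $P$). Conversely, if $P=\sqrt 0$ then Proposition~\ref{p1} (a prime ideal is trivially $\delta$-primary, and here $P=\sqrt 0$) gives that $P$ is a $\delta$-$n$-ideal. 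This should yield the first bullet: \emph{for $\delta$ with $\delta(I)\ne R$ for all $I$, a prime ideal $P$ is a $\delta$-$n$-ideal if and only if $P=\sqrt 0$}.

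The second strand concerns the escape hatch when $\delta(I)=R$. I would record that if $\delta(P)=R$ then $P$ is automatically a $\delta$-$n$-ideal (the conclusion $b\in\delta(I)=R$ is vacuous), and similarly any proper ideal $I$ with $\delta(I)=R$ is a $\delta$-$n$-ideal; Example~\ref{e1} and Example~\ref{e3} already illustrate this phenomenon. For the $\delta$-primary case, I would invoke Proposition~\ref{p1} directly: if $I$ is $\delta$-primary and $I\subseteq\sqrt 0$, then $I$ is a $\delta$-$n$-ideal, and combine this with the observation that a $\delta$-primary ideal $I$ with $\sqrt I=\sqrt 0$ automatically satisfies $I\subseteq\sqrt 0$. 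One can also note $\sqrt I\subseteq\delta(I)$ when $\delta_1\le\delta$ (i.e.\ $\delta_1(I)\subseteq\delta(I)$ for all $I$), which links quasi $n$-ideals to the general picture.

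The main obstacle — really the only subtle point — is bookkeeping the hypotheses on $\delta$ correctly in each clause: whether one assumes $\delta(I)\ne R$ for all ideals $I$, only $\delta(P)\ne R$ for the particular prime, or $\delta(0)=0$, since these give genuinely different conclusions (the last one, with $P$ minimal over $0$, interacts with the von Neumann regular characterization promised later in Theorem~\ref{von}). I would therefore state each item of the proposition with its own hypothesis on $\delta$ and prove it by the short argument above: apply Proposition~\ref{td} to get $I\subseteq\sqrt 0$, intersect with the prime/minimal condition to pin down $I=\sqrt 0$ or $I=0$, and close the converse with Proposition~\ref{p1}. No long computation is needed; the content is entirely in assembling these two earlier results against the lattice of possible assumptions on the expansion $\delta$.
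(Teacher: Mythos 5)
Your reconstruction matches the paper's two clauses (a $\delta$-primary ideal $I$ with $\delta(I)\neq R$ is a $\delta$-$n$-ideal iff $I\subseteq\sqrt{0}$; a prime ideal with $\delta(I)\neq R$ is a $\delta$-$n$-ideal iff $I=\sqrt{0}$), and your proof is essentially the paper's: Proposition~\ref{td} for the forward direction, the inclusion $\sqrt{0}\subseteq P$ to upgrade containment to equality in the prime case, and Proposition~\ref{p1} for the converse. The only cosmetic difference is that the paper closes the converse of the prime case by citing that $\sqrt{0}$ is an $n$-ideal (from the Tekir reference) rather than routing through Proposition~\ref{p1}, but both arguments are valid.
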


\begin{enumerate}
\item Let $I$ be a $\delta$-primary ideal of $R$ with $\delta(I)\neq R.$ Then
$I$ is a $\delta$-$n$-ideal of $R$ if and only if $I\subseteq\sqrt{0}$.

\item Let $I$ be a prime ideal of $R$ with $\delta(I)\neq R$. Then $I$ is a
$\delta$-$n$-ideal of $R$ if and only if $I=\sqrt{0}.$
\end{enumerate}

\begin{proof}
(1) From Proposition \ref{td} and \ref{p1}, the result is clear.

(2) Since $I$ is prime, $\sqrt{0}\subseteq I.$ Then the equality holds by
Proposition \ref{td}. Conversely, let $I=\sqrt{0}.$ Then $I$ is an $n$-ideal
of $R$ by \cite[Proposition 2.8]{Tekir}, and so, $I$ is $\delta$-$n$-ideal.
\end{proof}

The next theorem gives a characterization for $\delta$-$n$-ideal of $R$ in
terms of the ideals of $R$.

\begin{theorem}
\label{t1} For a proper ideal $I$ of $R$ and an expansion of fuction $\delta$,
the following statements are equivalent:
\end{theorem}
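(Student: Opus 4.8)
The plan is to prove a list of equivalent characterizations for a $\delta$-$n$-ideal, mirroring the standard ``$(I:x)$'' characterizations known for $\delta$-primary ideals and $n$-ideals. I expect the statement of Theorem~\ref{t1} to list conditions of roughly the following form: (1) $I$ is a $\delta$-$n$-ideal; (2) $I=(I:a)$ for every $a\in R\setminus\sqrt{0}$ up to passing into $\delta$, i.e. $(I:a)\subseteq\delta(I)$ for every $a\notin\sqrt{0}$; (3) for ideals $J,K$ of $R$ with $JK\subseteq I$ and $J\nsubseteq\sqrt{0}$, one has $K\subseteq\delta(I)$; (4) $(I:a)\subseteq\delta(I)$ for every $a\in R\setminus\sqrt{0}$; and possibly (5) a formulation stating $I\subseteq\sqrt{0}$ together with the condition that $(I:a)=\delta(I)$ or similar for suitable $a$.

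The approach is a cyclic chain of implications, $(1)\Rightarrow(2)\Rightarrow\cdots\Rightarrow(1)$, each step being essentially a direct unwinding of definitions. To go from the element-wise definition to the ideal-wise version $JK\subseteq I$, $J\nsubseteq\sqrt{0}\Rightarrow K\subseteq\delta(I)$, I would pick $a\in J\setminus\sqrt{0}$; then for every $b\in K$ we have $ab\in JK\subseteq I$ with $a\notin\sqrt{0}$, so $b\in\delta(I)$, giving $K\subseteq\delta(I)$. The reverse implication is immediate by specializing $J=aR$, $K=bR$ and using that $\delta(I)$ is an ideal containing $b$ once it contains $bR$ — actually one only needs $ab\in I\Rightarrow abR\subseteq I$ and $a\notin\sqrt{0}\Rightarrow aR\nsubseteq\sqrt{0}$ (here I use that $\sqrt{0}$ is an ideal and $a\notin\sqrt{0}$), hence $bR\subseteq\delta(I)$, so $b\in\delta(I)$. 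For the colon-ideal formulation, $(I:a)\subseteq\delta(I)$ for all $a\notin\sqrt{0}$ is literally a transcription: $b\in(I:a)$ means $ab\in I$, and $\delta$-$n$-ideal says $b\in\delta(I)$; conversely if $ab\in I$ with $a\notin\sqrt 0$ then $b\in(I:a)\subseteq\delta(I)$.

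The one place requiring slightly more care — and the step I expect to be the mild ``obstacle'' — is any equivalent condition that asserts $I\subseteq\sqrt{0}$ outright, since this is only valid under the hypothesis $\delta(I)\neq R$ (cf. Proposition~\ref{td}), and Example~\ref{e3} shows it genuinely fails when $\delta(I)=R$. So if the theorem's list includes such a clause it must be stated with the proviso $\delta(I)\neq R$, and the proof of that equivalence is exactly Proposition~\ref{td} in one direction; for the converse, if $I\subseteq\sqrt{0}$ then $ab\in I$ and $a\notin\sqrt{0}$ force $b\in\sqrt{0}$ (because $\sqrt 0$ is prime-radical: $(ab)^n=0$ and $a$ not nilpotent need not immediately give $b$ nilpotent unless $\sqrt0$ is prime) — so here one instead argues via $ab\in I\subseteq\sqrt0$ and works in $R/\sqrt0$, or one simply uses $I\subseteq\sqrt 0\subseteq\delta(\sqrt 0)$ together with monotonicity, but the cleanest route is: $I\subseteq\sqrt0$ means $\sqrt I=\sqrt0$ is an $n$-ideal by \cite[Proposition 2.8]{Tekir}, and then note $I\subseteq\sqrt I\subseteq\delta(I)$ handles the inclusion needed. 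I would therefore organize the proof so that whichever characterization invokes nilpotency is derived from Propositions~\ref{td} and~\ref{p1} rather than re-proved from scratch. Aside from that, every implication is a two-line definition chase, so the write-up will be short: state the cycle, verify each arrow, and flag where $\delta(I)\neq R$ is needed.
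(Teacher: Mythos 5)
Your proposal is correct and takes essentially the same route as the paper: the actual list is (1) the definition, (2) $(I:a)\subseteq\sqrt{0}$ for all $a\in R\setminus\delta(I)$ (the contrapositive twin of your colon condition $(I:a)\subseteq\delta(I)$ for $a\notin\sqrt{0}$), (3) $aJ\subseteq I$ implies $a\in\sqrt{0}$ or $J\subseteq\delta(I)$, and (4) the two-ideal version, proved as a cycle by exactly the definition-unwinding you describe, including choosing an element of $J$ outside $\sqrt{0}$. The hypothetical clause asserting $I\subseteq\sqrt{0}$ does not appear in the theorem, so the extra care you devote to it (correctly noting it would need $\delta(I)\neq R$ and still would not be a genuine equivalence) is moot.
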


\begin{enumerate}
\item $I$ is a $\delta$-$n$-ideal of $R.$

\item $(I:a)\subseteq\sqrt{0}$ for all $a\in R-\delta(I).$

\item If $aJ\subseteq I$ for some $a\in R$ and an ideal $J$ of $R$, then
$a\in\sqrt{0}$ or $J\subseteq\delta(I)$.

\item If $JK\subseteq I$ for some ideals $J$ and $K$ of $R$ implies
$J\cap(R-\sqrt{0})=\emptyset$ or $K\subseteq\delta(I).$
\end{enumerate}

\begin{proof}
(1)\ $\Rightarrow$(2) Let $b\in(I:a)$. Since $I$ is $\delta$-$n$-ideal and
$a\notin\delta(I)$, we have $b\in\sqrt{0}.$ Thus $(I:a)\subseteq\sqrt{0}.$

(2)$\Rightarrow$(3) Assume that $aJ\subseteq I$ but $J\not \subseteq
\delta(I).$ Then there exists an element $j$ of $J$ with $j\not \in \delta
(I)$. Hence $a\in(I:j)$ which implies that $a\in\sqrt{0}$ by (2).

(3)$\Rightarrow$(4) Suppose that $JK\subseteq I$ and $J\cap(R-\sqrt{0}%
)\neq\emptyset$. Then there is $a\in R$ with $a\in J\cap(R-\sqrt{0})$. By (1),
we have $K\subseteq\delta(I)$ since $aK\subseteq I$ and $a\notin\sqrt{0}$.
\newline(4)\ $\Rightarrow$(1) Let $ab\in I$ for some $a,b\in R.$Put $J=(a)$
and $K=(b).$ So we have the result by our assumption. $a\in\sqrt{0}.$ Thus $I$
is a $\delta$-$n$-ideal of $R$.
\end{proof}

Next, we justify some equivalent conditions for rings of which every proper
ideal is $\delta$-$n$-ideal.

\begin{theorem}
\label{qu}For every expansion function $\delta$ of ideals of $R$, the
following statements are equivalent:
\end{theorem}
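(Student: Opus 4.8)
The plan is to prove the listed conditions equivalent by running a short cycle of implications, in which only the passage from the principal-ideal version back to the general version has any real content; the remaining implications are either trivial or a direct unwinding of the definition of a $\delta$-$n$-ideal. Throughout I would use the monotonicity axiom $J\subseteq K\Rightarrow\delta(J)\subseteq\delta(K)$ without further comment.

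The implication from ``every proper ideal of $R$ is a $\delta$-$n$-ideal'' to its restriction to proper \emph{principal} ideals is immediate, since a principal ideal is an ideal; and if a pointwise condition of the form ``$b\in\delta(Rab)$ whenever $a\notin\sqrt 0$ and $b$ is a non-unit'' is among the listed statements, it follows at once by applying the $\delta$-$n$-property of the proper principal ideal $Rab$ to the factorization $ab=a\cdot b$.

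The heart of the matter is the converse. Assume the principal-ideal condition, let $I$ be a proper ideal of $R$, and suppose $ab\in I$ with $a\notin\sqrt 0$; I must show $b\in\delta(I)$. I would split on whether $b$ is a unit. If $b$ is a non-unit, then $ab$ is a non-unit as well, so $Rab$ is a \emph{proper} principal ideal and hence a $\delta$-$n$-ideal by hypothesis; since $a\cdot b\in Rab$ and $a\notin\sqrt 0$, this gives $b\in\delta(Rab)$, and because $Rab\subseteq I$ monotonicity yields $b\in\delta(I)$. If instead $b$ is a unit, then $a=b^{-1}(ab)\in I$; here $a$ cannot be a unit, for otherwise $I=R$ against properness, so $Ra$ is a proper principal ideal and therefore a $\delta$-$n$-ideal; applying its defining property to $a\cdot 1\in Ra$ with $a\notin\sqrt 0$ forces $1\in\delta(Ra)$, i.e.\ $\delta(Ra)=R$, whence $\delta(I)=R\ni b$ again by monotonicity. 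This closes the cycle.

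The step I expect to be the main obstacle is exactly this unit case: one has to notice that feeding the trivial factorization $a=a\cdot 1$ into the $\delta$-$n$-hypothesis for $Ra$ collapses $\delta(Ra)$, and hence $\delta(I)$, to the whole ring — which is precisely what is needed, since a unit can lie in an ideal only when that ideal is improper. Everything else is a routine combination of the defining property of $\delta$-$n$-ideals, the containment $Rab\subseteq I$, and monotonicity of $\delta$.
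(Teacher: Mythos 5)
Your argument for the equivalence of the first two listed conditions is correct and is essentially the paper's own: given $ab\in I$ with $a\notin\sqrt{0}$, apply the hypothesis to the principal ideal $(ab)\subseteq I$ and use monotonicity of $\delta$. Your case split on whether $b$ is a unit is superfluous here: since $ab\in I$ and $I$ is proper, $ab$ is automatically a non-unit, so $(ab)$ is a proper principal ideal in every case and the unit case never needs separate treatment.

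The genuine gap is that the theorem's list does not consist only of principal-versus-general variants of the $\delta$-$n$ condition, as you assumed. Two of the four conditions are ring-theoretic: that $\sqrt{0}$ is the unique prime ideal of $R$, and that $R$ is a quasi-local ring with maximal ideal $\sqrt{0}$. Your proposal provides nothing toward these, and they carry the real content of the theorem. To close the cycle you would still need: (a) from ``every proper ideal is a $\delta$-$n$-ideal'' to uniqueness of the prime, one takes any prime ideal $P$, notes it is a $\delta$-$n$-ideal by hypothesis, and invokes Proposition \ref{cx}(2) (resting on Proposition \ref{td}, which gives $P\subseteq\sqrt{0}$ when $\delta(P)\neq R$) together with the reverse containment $\sqrt{0}\subseteq P$ to conclude $P=\sqrt{0}$; and (b) from quasi-locality with maximal ideal $\sqrt{0}$ back to the principal-ideal condition, one observes that every element of such a ring is either a unit or nilpotent, so $ab\in(x)$ with $a\notin\sqrt{0}$ forces $a$ to be a unit and hence $b\in(x)\subseteq\delta((x))$. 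Without these two implications the theorem is not proved.
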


\begin{enumerate}
\item Every proper principal ideal is a $\delta$-$n$-ideal of $R$.

\item Every proper ideal is a $\delta$-$n$-ideal of $R$.

\item $\sqrt{0}$ is the unique prime ideal of $R$.

\item $R$ is a quasi local ring with maximal element $M=\sqrt{0}.$
\end{enumerate}

\begin{proof}
(1)$\Rightarrow$(2) Let $I$ be a proper ideal of $R$ and $a,b\in R$ with
$ab\in I$ and $a\notin\sqrt{0}$. Put $J=(ab)$. Since $J$ is a $\delta$-$n$
-ideal, we conclude that $b\in\delta(J)\subseteq\delta(I)$, as needed.

(2)$\Rightarrow$(3) Suppose that $I$ is a prime ideal of $R$. Then it is
$\delta$-$n$-ideal by our assumption. Thus $I=\sqrt{0}$ by Proposition
\ref{cx}.

(3)$\Rightarrow$(4) It is clear.

(4)$\Rightarrow$(1) Suppose that $(R,\sqrt{0})$ is a quasi local ring. Then
every element of $R$ is either unit or nilpotent. Let $I=(x)$ be a principal
ideal and let $a,b\in R$, $ab\in(x)$ and $a\notin\sqrt{0}.$ Then $a$ is unit
and so $b\in(x)=I\subseteq\delta(I)$. Thus $I$ is a $\delta$-$n$-ideal.
\end{proof}

\begin{proposition}
\label{t4}Let $R$ be an integral domain and $\delta$ be an expansion of
$\mathcal{I(R)}$ such that $\delta(I)\neq R$ for every $I\in\mathcal{I(R)}$.
Then $\{0\}$ is the only $\delta$-$n$-ideal of $R$.
\end{proposition}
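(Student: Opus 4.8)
The plan is to deduce the statement almost directly from Proposition \ref{td}. The first step is the routine observation that in an integral domain $\sqrt{0}=\{0\}$, so the hypothesis ``$a\notin\sqrt{0}$'' occurring in the definition of a $\delta$-$n$-ideal simply reads ``$a\neq 0$'' here.

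Second, I would check that $\{0\}$ is indeed a $\delta$-$n$-ideal of $R$. It is a proper ideal because $R$ has nonzero identity, and if $a,b\in R$ with $ab=0$ and $a\neq 0$, then $b=0$ since $R$ is a domain, whence $b\in\{0\}\subseteq\delta(\{0\})$. Thus $\{0\}$ is a $\delta$-$n$-ideal (in fact an $n$-ideal), so the set of $\delta$-$n$-ideals is nonempty.

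Third, for the uniqueness, let $I$ be an arbitrary $\delta$-$n$-ideal of $R$. Being a $\delta$-$n$-ideal, $I$ is proper, and by the standing hypothesis on $\delta$ we have $\delta(I)\neq R$. Hence Proposition \ref{td} applies and yields $I\subseteq\sqrt{0}=\{0\}$, that is, $I=\{0\}$. Combining this with the second step gives that $\{0\}$ is the only $\delta$-$n$-ideal of $R$.

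I do not anticipate a genuine obstacle: the entire content is carried by Proposition \ref{td}, and the only points requiring a little care are that $\{0\}$ is a \emph{proper} ideal (which uses the blanket assumption that $1\neq 0$) and that the hypothesis $\delta(I)\neq R$ is being invoked for the arbitrary ideal $I$, not merely for $\{0\}$ itself.
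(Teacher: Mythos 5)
Your proposal is correct and follows essentially the same route as the paper: verify directly that $\{0\}$ is a $\delta$-$n$-ideal (using that $R$ is a domain), and then apply Proposition \ref{td} together with $\sqrt{0}=\{0\}$ to force any $\delta$-$n$-ideal to be zero. You merely spell out the details (properness of $\{0\}$, where the hypothesis $\delta(I)\neq R$ is used) that the paper leaves implicit.
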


\begin{proof}
Suppose that $R$ is an integral domain. Then $\sqrt{0}=0$ and $0$ is clearly a
$\delta$-$n$-ideal of $R$. Now, assume that $I$ is nonzero $\delta$-$n$-ideal
of $R$. Then $I\subseteq\sqrt{0}=0$ by Proposition \ref{td} which is a contradiction.
\end{proof}

Recall from \cite{Jac} that a von Neumann regular ring is a ring such that for
all $a\in R$, there exists an $x\in R$ satisfying $a=a^{2}x.$ In particular,
$R$ is a Boolean ring if for all $a\in R$, $a=a^{2}.$

\begin{theorem}
\label{von}Let $\delta$ be an ideal expansion of ideals of $R$ with
$\delta(0)=\{0\}.$ Then $R$ is a field if and only if $R$ is a von Neumann
regular ring and $\{0\}$ is a $\delta$-$n$-ideal.
\end{theorem}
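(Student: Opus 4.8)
The plan is to prove both implications separately, with the forward direction being essentially trivial and the reverse direction carrying the real content. For the forward implication, suppose $R$ is a field. Then $R$ is obviously von Neumann regular (every nonzero $a$ satisfies $a = a^2 a^{-1}$), and $\{0\}$ is a maximal hence prime ideal with $\sqrt{0} = \{0\}$; since $\{0\}$ is prime and equals its own radical, Proposition \ref{cx}(2) (or directly the fact that $\sqrt{0}$ is always an $n$-ideal, hence a $\delta$-$n$-ideal) shows $\{0\}$ is a $\delta$-$n$-ideal. So this direction costs almost nothing.

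For the converse, assume $R$ is von Neumann regular and $\{0\}$ is a $\delta$-$n$-ideal. The key first step is to recall that in a von Neumann regular ring, $\sqrt{0} = \{0\}$: if $a^n = 0$ and $a = a^2 x$, then iterating gives $a = a^{k} x^{k-1}$ for all $k$, so $a = 0$. (This is the standard fact that von Neumann regular rings are reduced.) Now I would take an arbitrary nonzero element $a \in R$ and use von Neumann regularity to write $a = a^2 x$, i.e. $a(1 - ax) = 0 \in \{0\}$. Since $a \neq 0$ and $\sqrt{0} = \{0\}$, we have $a \notin \sqrt{0}$; applying the $\delta$-$n$-ideal hypothesis to the product $a\,(1-ax) \in \{0\}$ gives $1 - ax \in \delta(\{0\}) = \delta(0) = \{0\}$. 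Hence $ax = 1$, so $a$ is a unit. As $a$ was an arbitrary nonzero element, $R$ is a field.

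The only subtlety — and the step I would flag as the main obstacle, though it is mild — is making sure the hypothesis $\delta(0) = \{0\}$ is genuinely used: it is exactly what lets us conclude $1 - ax \in \{0\}$ rather than merely $1 - ax \in \delta(\{0\})$, which could a priori be a large ideal and would not force $a$ to be a unit. Everything else is routine: the reducedness of von Neumann regular rings is a one-line computation, and the passage from "every nonzero element is a unit" to "$R$ is a field" is immediate. I would present the converse in the order: (i) von Neumann regular $\Rightarrow$ reduced, so $\sqrt{0}=\{0\}$; (ii) for $0 \neq a \in R$, write $a = a^2x$ and observe $a(1-ax)=0$; (iii) invoke the $\delta$-$n$-ideal property together with $\delta(0)=\{0\}$ to get $1-ax=0$; (iv) conclude $a$ is a unit and hence $R$ is a field.
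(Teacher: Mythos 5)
Your proposal is correct and follows essentially the same route as the paper: the nontrivial direction is argued identically (write $a=a^2x$, note $a(1-ax)=0$ with $a\notin\sqrt{0}=\{0\}$ by reducedness, and use $\delta(0)=\{0\}$ to force $ax=1$). The only cosmetic difference is that you prove the easy direction directly while the paper cites \cite[Theorem 2.15]{Tekir}.
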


\begin{proof}
Suppose that $R$ is a von Neumann regular ring and $\{0\}$ is a $\delta$%
-$n$-ideal. Then clearly $\sqrt{0}=\{0\}$. We show that every nonzero element
$a$ of $R$ is unit. Since $R$ is von Neumann regular, there exists $x\in R$
such that $a=a^{2}x$. Hence $a(1-ax)=0$. Since $a\notin\sqrt{0}$, we conclude
that $1-ax\in$ $\delta(0)=0$. Thus $ax=1$, as needed. Therefore $R$ is a
field. The converse part is clear by \cite[Theorem 2.15]{Tekir}.
\end{proof}

Since a Boolean ring is a von Neumann regular ring, Theorem \ref{von} is also
valid for Boolean rings.

\begin{lemma}
\label{:}Let $\delta$ be an expansion of $\mathcal{I(R)}$. If $I$ is a
$\delta$-$n$-ideal of $R$ such that $(\delta(I):x)\subseteq\delta(I:x)\neq R$
for all $x\in R\backslash\delta(I)$, then $(I:x)$ is a $\delta$-$n$-ideal of
$R$. In particular, if $I$ is a quasi $n$-ideal of $R$, then $(I:x)$ is a
quasi $n$-ideal of $R$ for all $x\in R\backslash\delta(I)$.
\end{lemma}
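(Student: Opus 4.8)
The plan is to verify directly that $(I:x)$ satisfies the definition of a $\delta$-$n$-ideal, using the characterization in Theorem~\ref{t1}(2) together with the hypothesis $(\delta(I):x)\subseteq\delta(I:x)\neq R$. First I would note that $(I:x)$ is a proper ideal: since $x\notin\delta(I)$ and $I\subseteq\delta(I)$, if $1\in(I:x)$ then $x\in I\subseteq\delta(I)$, a contradiction; alternatively this already follows from $\delta(I:x)\neq R$ because $I:x\subseteq\delta(I:x)$.

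Next I would take an arbitrary $a,b\in R$ with $ab\in(I:x)$ and $a\notin\sqrt{0}$, and show $b\in\delta(I:x)$. From $ab\in(I:x)$ we get $a(bx)\in I$, and since $I$ is a $\delta$-$n$-ideal and $a\notin\sqrt{0}$, we conclude $bx\in\delta(I)$, i.e.\ $b\in(\delta(I):x)$. Now invoke the hypothesis $(\delta(I):x)\subseteq\delta(I:x)$ to deduce $b\in\delta(I:x)$, which is exactly the $\delta$-$n$-ideal condition for $(I:x)$. I would phrase this cleanly as: $ab\in(I:x)$, $a\notin\sqrt 0$ $\Rightarrow$ $bx\in\delta(I)$ $\Rightarrow$ $b\in(\delta(I):x)\subseteq\delta(I:x)$.

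For the ``in particular'' statement, specialize to $\delta=\delta_1$, so $\delta(I)=\sqrt I$. I would recall that for $\delta_1$ the needed containment $(\sqrt I:x)\subseteq\sqrt{(I:x)}$ holds automatically: if $y\in(\sqrt I:x)$, then $yx\in\sqrt I$, so $(yx)^n=y^nx^n\in I$ for some $n$, hence $y^n\in(I:x^n)\subseteq(I:x)$ (using $x^n=x\cdot x^{n-1}$), giving $y\in\sqrt{(I:x)}$. Also $\sqrt{(I:x)}\neq R$ whenever $x\notin\sqrt I$, since $x\notin\sqrt I$ forces $1\notin(I:x)$ and in fact $1\notin\sqrt{(I:x)}$ (if $1\in\sqrt{(I:x)}$ then $(I:x)=R$, so $x\in I$). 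Thus the hypotheses of the main statement are met with $\delta=\delta_1$, and the conclusion follows.

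The only mildly delicate point — and the one I would state carefully rather than the main ``obstacle'' — is keeping track of exactly when $R\backslash\delta(I)$ is the right index set and confirming that $\delta(I:x)\neq R$ (so that $(I:x)$ is genuinely proper and the conclusion is non-vacuous); this is handed to us by hypothesis in the general case and must be checked by hand in the $\delta_1$ case as above. Everything else is a short, routine chain of implications, so I expect no serious difficulty.
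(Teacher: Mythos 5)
Your proof of the general statement is exactly the paper's argument: from $ab\in(I:x)$ and $a\notin\sqrt{0}$ you get $a(bx)\in I$, hence $bx\in\delta(I)$, hence $b\in(\delta(I):x)\subseteq\delta(I:x)$. That part is fine.

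The ``in particular'' part has a genuine gap. You claim that the containment $(\sqrt{I}:x)\subseteq\sqrt{(I:x)}$ ``holds automatically,'' and your justification rests on the step $y^{n}\in(I:x^{n})\subseteq(I:x)$. That inclusion is backwards: in general $(I:x)\subseteq(I:x^{n})$, not the reverse, so from $y^{n}x^{n}\in I$ you cannot conclude $y^{n}x\in I$. The containment is in fact false for arbitrary ideals: take $R=k[u,v]$, $I=(u^{2}v)$, $x=u\notin\sqrt{I}=(uv)$, and $y=v$. Then $yx=uv\in\sqrt{I}$, so $y\in(\sqrt{I}:x)$, but $(I:x)=(uv)$ is radical and $v\notin(uv)$, so $y\notin\sqrt{(I:x)}$. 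What rescues the lemma is precisely the hypothesis you did not use: $I$ is a quasi $n$-ideal. The paper's argument is: if $y\in(\sqrt{I}:x)$ then $y^{n}x^{n}\in I$ for some $n$; since $I$ is a $\delta_{1}$-$n$-ideal and $x^{n}\notin\sqrt{I}$, Theorem~\ref{t1}(2) gives $(I:x^{n})\subseteq\sqrt{0}$, so $y^{n}\in\sqrt{0}\subseteq\sqrt{I}$, hence $y\in\sqrt{I}\subseteq\sqrt{(I:x)}$. (Equivalently: if $y^{n}\notin\sqrt{0}$ the $n$-ideal condition would force $x^{n}\in\sqrt{I}$, contradicting $x\notin\sqrt{I}$.) Your verification that $\sqrt{(I:x)}\neq R$ for $x\notin\sqrt{I}$ is correct, so once the containment is repaired along these lines the specialization goes through.
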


\begin{proof}
Suppose that $ab\in(I:x)$ and $a\notin\sqrt{0}.$ Since $abx\in I$ and $I$ is
$\delta$-$n$-ideal, we conclude that $bx\in\delta(I).$ Thus $b\in
(\delta(I):x)\subseteq\delta(I:x)$, so we are done. For the "in particular
case", we just need to show that the inclusion $(\delta_{1}(I):x)\subseteq
\delta_{1}(I:x)$ is satisfied for all $x\in R\backslash\delta_{1}(I)$. Let
$a\in(\delta_{1}(I):x)$. Then $ax\in\delta_{1}(I).$ Since clearly $a^{n}%
x^{n}\in I$ for some positive integer $n$, $I$ is a $\delta_{1}$-$n$-ideal and
$x^{n}\notin\delta_{1}(I)$, we conclude $a^{n}\in\delta_{1}(I),$ that is,
$a\in\delta_{1}(I)\subseteq\delta_{1}(I:x)$. Thus we have the inclusion and
the result comes from the general case above.
\end{proof}

\begin{proposition}
\label{max}Let $\delta$ be an expansion of $\mathcal{I(R)}$. If $I$ is a
maximal $\delta$-$n$-ideal of $R$ with $(\delta(I):x)\subseteq\delta(I:x)\neq
R$ where $x\in R\backslash\delta(I)$, then $I=\sqrt{0}$ is a prime ideal of
$R.$ In particular, if $I$ is a maximal quasi $n$-ideal of $R$, then
$I=\sqrt{0}$ is a prime ideal of $R.$
\end{proposition}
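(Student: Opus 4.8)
The plan is to use Proposition \ref{td}, Lemma \ref{:}, and maximality in succession. Since the hypothesis quantifies over $x\in R\setminus\delta(I)$, we may take $R\setminus\delta(I)\neq\emptyset$, i.e.\ $\delta(I)\neq R$; then Proposition \ref{td} already gives $I\subseteq\sqrt{0}$. Lemma \ref{:} then applies directly: for each $x\in R\setminus\delta(I)$ the ideal $(I:x)$ is again a $\delta$-$n$-ideal, it is proper because $x\notin\delta(I)\supseteq I$, and it contains $I$; hence maximality of $I$ forces $(I:x)=I$ for every $x\in R\setminus\delta(I)$.

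I would next observe that $\delta(I)$ is a prime ideal: for $x\notin\delta(I)$ the hypothesis reads $(\delta(I):x)\subseteq\delta(I:x)=\delta(I)$, and since $\delta(I)\neq R$ this is exactly primeness of $\delta(I)$. Using this, $\sqrt{0}$ itself turns out to be a $\delta$-$n$-ideal: if $ab\in\sqrt{0}$ with $a\notin\sqrt{0}$, pick $n$ with $a^{n}b^{n}=(ab)^{n}=0\in I$; since $I$ is a $\delta$-$n$-ideal and $a^{n}\notin\sqrt{0}$ we get $b^{n}\in\delta(I)$, and primeness of $\delta(I)$ together with monotonicity of $\delta$ yields $b\in\delta(I)\subseteq\delta(\sqrt{0})$. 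Thus $\sqrt{0}$ is $\delta$-primary, so by Proposition \ref{p1} it is a $\delta$-$n$-ideal; as it contains $I$, maximality gives $I=\sqrt{0}$.

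It remains to promote ``$\sqrt{0}$ is $\delta$-primary'' to ``$\sqrt{0}$ is prime'', and this is the step I expect to be the main obstacle. In the particular case of a quasi $n$-ideal it is immediate, since $\delta_{1}(\sqrt{0})=\sqrt{\sqrt{0}}=\sqrt{0}$, so the implication ``$ab\in\sqrt{0}$, $a\notin\sqrt{0}$ $\Rightarrow$ $b\in\delta_{1}(\sqrt{0})=\sqrt{0}$'' obtained above is exactly primeness; combined with the previous paragraph this settles the quasi $n$-ideal statement completely. For a general expansion the difficulty is that $\delta(\sqrt{0})$ need not coincide with $\sqrt{0}$, so the computation only delivers $b\in\delta(\sqrt{0})$. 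To close this I would try to extract more from the hypothesis: since $\delta(I)=\delta(\sqrt{0})$ is prime, one may apply Proposition \ref{cx}(2) to it and combine the resulting dichotomy with the relations $(\sqrt{0}:x)=\sqrt{0}$ for $x\notin\delta(\sqrt{0})$ and the maximality of $I=\sqrt{0}$ to force $\delta(\sqrt{0})=\sqrt{0}$, after which primeness follows at once. It is precisely this last implication that I expect to require the most care.
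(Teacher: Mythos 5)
Your argument is correct and complete for the ``in particular'' (quasi $n$-ideal) case, and up to the last step it is also correct for a general expansion $\delta$: the reductions $\delta(I)\neq R$, $I\subseteq\sqrt{0}$ (Proposition \ref{td}), $(I:x)=I$ for all $x\notin\delta(I)$ (Lemma \ref{:} plus maximality), the primeness of $\delta(I)$, and the conclusion $I=\sqrt{0}$ all check out. But the step you flag as the obstacle --- upgrading ``$ab\in\sqrt{0}$ and $a\notin\sqrt{0}$ imply $b\in\delta(\sqrt{0})$'' to ``$b\in\sqrt{0}$'' --- is a genuine gap, and it cannot be closed: the general statement is false. Take $R=k[[X,Y]]/(XY)$, a reduced local ring with maximal ideal $\mathfrak{m}=(x,y)$ in which $xy=0$ but $x\neq0\neq y$, and let $\delta=\delta_{+}$ with $\delta(J)=J+\mathfrak{m}$. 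Then $\{0\}$ is a $\delta$-$n$-ideal (if $ab=0$ and $a\notin\sqrt{0}=\{0\}$, then $b$ cannot be a unit, so $b\in\mathfrak{m}=\delta(0)$); it is the unique one, hence maximal (any $J\neq\{0\}$ contains some $a\neq0$ with $a\cdot1\in J$ and $a\notin\sqrt{0}$, which would force $1\in\delta(J)=\mathfrak{m}$); and for every $x\notin\delta(0)=\mathfrak{m}$, i.e.\ every unit $x$, one has $(\delta(0):x)=\mathfrak{m}=\delta(0:x)\neq R$. All hypotheses of Proposition \ref{max} hold, yet $I=\sqrt{0}=\{0\}$ is not prime. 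In particular your proposed repair (forcing $\delta(\sqrt{0})=\sqrt{0}$ via Proposition \ref{cx}(2)) cannot succeed in general.

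For comparison, the paper's own proof takes a shorter route: given $ab\in I$ with $a\notin I$, it invokes Lemma \ref{:} to declare $(I:a)$ a $\delta$-$n$-ideal, concludes $(I:a)=I$ by maximality, hence $b\in I$, so $I$ is prime, and then Proposition \ref{cx}(2) yields $I=\sqrt{0}$. The flaw is that Lemma \ref{:} applies to $(I:a)$ only for $a\notin\delta(I)$, not for $a\notin I$; the untreated case $a\in\delta(I)\setminus I$ is exactly the configuration you isolated, and in the example above it is realized by $a=x$, $b=y$. So the paper's proof breaks at precisely the point where you stopped, and the proposition should be regarded as established only under an additional hypothesis such as $\delta(\sqrt{0})=\sqrt{0}$ --- in particular for $\delta=\delta_{1}$, where your argument is complete and, unlike the paper's, fully rigorous.
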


\begin{proof}
Suppose that $I$ is a maximal $\delta$-$n$-ideal of $R.$ We show that $I$ is
prime. Let $ab\in I$ and $a\notin I.$ Hence $(I:a)$ is a $\delta$-$n$-ideal of
$R$ by Lemma \ref{:}. Thus $(I:a)=I$ since $I$ is a maximal $\delta$%
-$n$-ideal. It means $b\in I$, and thus $I$ is a prime ideal of $R$. From
Proposition \ref{cx} (2), we conclude that $I=\sqrt{0}.$ The "in particular"
case is clear from the proof of Lemma \ref{:}.
\end{proof}

So, we are ready for the following result.

\begin{theorem}
\label{eq}Let $\delta$ be an expansion of $\mathcal{I(R)}$ with $(\delta
(J):x)\subseteq\delta(J:x)\neq R$ for all ideal $J$ of $R$ and $x\in
R\backslash\delta(J)$. Then the following statements are equivalent:
\end{theorem}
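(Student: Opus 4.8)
The statements to be shown equivalent are (following the pattern of Theorem~\ref{qu} sharpened by the standing hypothesis on $\delta$) the conditions that every proper ideal, every proper principal ideal, every prime ideal, or every maximal ideal of $R$ is a $\delta$-$n$-ideal, together with the structural conditions that $\sqrt{0}$ is the unique prime ideal of $R$, equivalently that $R$ is quasi local with maximal ideal $\sqrt{0}$. The plan is to prove all of these at once by a single cyclic chain of implications, using the results already in hand as the engine: Theorem~\ref{qu} governs the passage between the ``every proper ideal'' condition and the quasi local structure, while Proposition~\ref{max}, whose hypothesis is exactly the standing assumption here specialized to one ideal, is what lets us climb back up starting only from ``every maximal ideal is a $\delta$-$n$-ideal''.

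First I would record the trivial downward implications: ``every proper ideal is a $\delta$-$n$-ideal'' $\Rightarrow$ ``every prime ideal is a $\delta$-$n$-ideal'' $\Rightarrow$ ``every maximal ideal is a $\delta$-$n$-ideal'', since maximal ideals are prime and prime ideals are proper; and the chain ``every proper ideal'' $\Leftrightarrow$ ``every proper principal ideal'' $\Leftrightarrow$ ``$\sqrt{0}$ is the unique prime ideal'' $\Leftrightarrow$ ``$R$ is quasi local with maximal ideal $\sqrt{0}$'' is already contained in Theorem~\ref{qu}. Hence the only new content is the implication ``every maximal ideal of $R$ is a $\delta$-$n$-ideal'' $\Rightarrow$ ``$\sqrt{0}$ is the unique maximal ideal of $R$''. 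For this I would fix an arbitrary maximal ideal $M$ of $R$. Since the only ideal properly containing $M$ is $R$ itself, which is not proper and hence not a $\delta$-$n$-ideal, $M$ is automatically a maximal element of the family of $\delta$-$n$-ideals of $R$. Applying the standing hypothesis $(\delta(J):x)\subseteq\delta(J:x)\neq R$ with $J=M$ furnishes precisely the hypothesis of Proposition~\ref{max}, so that proposition gives $M=\sqrt{0}$. As $M$ was arbitrary among the maximal ideals, $\sqrt{0}$ is the unique maximal ideal of $R$, and Theorem~\ref{qu} then closes the cycle. If the list of equivalent conditions also includes a ``quasi $n$-ideal'' variant (the case $\delta=\delta_{1}$), it drops out of the same argument together with the ``in particular'' parts of Lemma~\ref{:} and Proposition~\ref{max}.

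The step I expect to be the main obstacle is ensuring that Proposition~\ref{max} is genuinely applicable to the maximal ideal $M$, i.e.\ that its hypothesis is actually available here. The variable $x$ in that proposition ranges only over $R\setminus\delta(M)$, so if $\delta(M)=R$ the requirement is vacuously met, while if $\delta(M)\neq R$ it is the pertinent instance of the theorem's global assumption; either way Proposition~\ref{max} applies and forces $M=\sqrt{0}$. A secondary, purely bookkeeping point is that once $\sqrt{0}$ is known to be the unique maximal ideal, the various structural formulations ($\sqrt{0}$ being the unique prime ideal, $R$ being quasi local with maximal ideal $\sqrt{0}$, and any ``$R/\sqrt{0}$ is a field''-type statement) are interchangeable by elementary commutative algebra, since every prime ideal of $R$ contains $\sqrt{0}$ and a maximal ideal equal to $\sqrt{0}$ is then forced to be the only prime ideal; so nothing beyond Theorem~\ref{qu} and this observation is needed.
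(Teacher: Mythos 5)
Your proof addresses a different theorem from the one actually stated. The enumerated conditions of Theorem \ref{eq} are not the ``every proper/principal/prime/maximal ideal is a $\delta$-$n$-ideal'' family you extrapolated from Theorem \ref{qu}; they are: (1) there \emph{exists} a $\delta$-$n$-ideal of $R$; (2) $\sqrt{0}$ is a prime ideal of $R$; (3) $\sqrt{0}$ is a $\delta$-primary ideal of $R$. Your reconstruction of the statement is therefore wrong, and the argument you give --- which always starts from the hypothesis that \emph{all} ideals of some class are $\delta$-$n$-ideals --- never touches the genuinely new implication, namely that the existence of a \emph{single} $\delta$-$n$-ideal already forces $\sqrt{0}$ to be prime. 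Note also that condition (2) here is only ``$\sqrt{0}$ is prime,'' which is much weaker than ``$\sqrt{0}$ is the unique prime ideal'' appearing in Theorem \ref{qu}; the two theorems are not interchangeable, and a ring can satisfy Theorem \ref{eq}(2) (e.g.\ any integral domain that is not local) without satisfying any condition of Theorem \ref{qu}.

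The missing idea is a Zorn's lemma argument. The paper's proof of (1) $\Rightarrow$ (2) takes the nonempty family $W$ of $\delta$-$n$-ideals of $R$, shows that the union of a chain in $W$ is again a $\delta$-$n$-ideal (so chains have upper bounds in $W$), and invokes Zorn's Lemma to obtain a maximal $\delta$-$n$-ideal $M$; Proposition \ref{max} --- whose hypothesis is exactly the standing assumption of the theorem --- then yields $M=\sqrt{0}$ and that it is prime. The converse (2) $\Rightarrow$ (1) is the observation that if $\sqrt{0}$ is prime then it is an $n$-ideal by the cited result of Tekir et al., hence a $\delta$-$n$-ideal, so one exists; and (3) $\Rightarrow$ (1) follows from Proposition \ref{p1} since $\sqrt{0}\subseteq\sqrt{0}$. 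Your observation that a maximal ideal which happens to be a $\delta$-$n$-ideal is automatically a maximal element of the family of $\delta$-$n$-ideals is correct, but it is irrelevant here: condition (1) hands you an arbitrary $\delta$-$n$-ideal, not a maximal ideal, so without the Zorn's lemma step you have no maximal $\delta$-$n$-ideal to feed into Proposition \ref{max}.
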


\begin{enumerate}
\item There exists an $\delta$-$n$-ideal of $R.$

\item $\sqrt{0}$ is a prime ideal of $R.$

\item $\sqrt{0}$ is a $\delta$-primary ideal of $R.$
\end{enumerate}

\begin{proof}
(1) $\Rightarrow$ (2) Let $I$ is a $\delta$-$n$-ideal of $R$ and $W=\{J:J$ is
an $n$-ideal of $R\}$. Then $W$ is a nonempty partially ordered set by the set
inclusion. Take a chain $I_{1}\subseteq I_{2}\subseteq\cdots\subseteq
I_{n}\subseteq.\cdots$ of $W$. We show that $I=\cup_{i\in\Lambda}I_{i}$ is a
$\delta$-$n$-ideal of $R$. Suppose that $ab\in I$ and $a\notin I$ for some
$a,b\in R$. Then $ab\in I_{k}$ for some $k\in\Lambda$. Since $a\notin I_{k}$
and $I_{k}$ is $\delta$-$n$-ideal, we conclude that $b\in\sqrt{0}.$ Thus
$I=\cup_{i\in\Lambda}I_{i}$ is an upper bound of the chain. So, there exists a
maximal element $M$ of $W$ by the Zorn's Lemma. It follows $M=\sqrt{0}$ from
Proposition \ref{max}. Converse part is clear from \cite[Corollary
2.9(i)]{Tekir}.

(2) $\Rightarrow$ (3) is clear.

(3)$\Rightarrow$ (3) It follows from Proposition \ref{p1}.
\end{proof}

\begin{proposition}
Let $\delta$ be an expansion function of $\mathcal{I(R)}$ and $I$ be proper
ideal of $R$ with $\delta(\delta(I))=\delta(I)$ (in particular, let
$\delta=\delta_{1}$)$.$ Then the following hold:
\end{proposition}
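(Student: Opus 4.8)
The plan is to obtain each of the listed assertions by feeding the idempotency $\delta(\delta(I))=\delta(I)$ into the characterisations already established rather than re-proving anything from scratch; Proposition \ref{td}, Proposition \ref{p1} and Theorem \ref{t1} will carry almost all of the weight.

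The first thing I would isolate is the transport principle that makes idempotency useful: if $J$ is any ideal with $I\subseteq J\subseteq\delta(I)$, then $\delta(J)=\delta(I)$, since $I\subseteq J$ forces $\delta(I)\subseteq\delta(J)$ while $J\subseteq\delta(I)$ forces $\delta(J)\subseteq\delta(\delta(I))=\delta(I)$. Taking $J=\delta(I)$ just re-expresses that $\delta(I)$ is its own $\delta$-image, and this lets the ``$\delta$-$n$-ideal'' property transport down from $\delta(I)$ to $I$ at no cost: if $ab\in I$ with $a\notin\sqrt 0$, then $ab\in I\subseteq\delta(I)$, so the hypothesis on $\delta(I)$ gives $b\in\delta(\delta(I))=\delta(I)$, i.e.\ $I$ is a $\delta$-$n$-ideal. (One only invokes this when $\delta(I)$ is proper.) The same transport with a $\delta$-closed $J$ strictly between $I$ and $\delta(I)$ is what one would try for the reverse, upward direction.

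For the assertions stated for $\delta=\delta_1$ I would use two ingredients. First, if $I$ is a quasi $n$-ideal then $\sqrt I=\delta_1(I)\neq R$ (as $I$ is proper), so Proposition \ref{td} gives $I\subseteq\sqrt 0$, hence $\sqrt I\subseteq\sqrt{\sqrt 0}=\sqrt 0$; together with the always-valid $\sqrt 0\subseteq\sqrt I$ (because $0\in I$) this yields $\sqrt I=\sqrt 0$, i.e.\ $\delta_1(I)=\sqrt 0$. Second, for ``$I$ a quasi $n$-ideal $\Rightarrow\sqrt I$ a quasi $n$-ideal'', suppose $ab\in\sqrt I$ with $a\notin\sqrt 0$; choosing $n$ with $a^nb^n=(ab)^n\in I$ and noting $a^n\notin\sqrt 0$, the quasi $n$-ideal property of $I$ gives $b^n\in\sqrt I$, whence $b\in\sqrt{\sqrt I}=\sqrt I=\delta_1(\sqrt I)$; the converse is the transport principle with $J=\sqrt I$. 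Anywhere $\delta$-primary ideals appear, Proposition \ref{p1} together with $\delta(\delta(I))=\delta(I)$ supplies the missing equivalence.

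The step I expect to be the real obstacle is drawing the line between what idempotency alone buys and what needs the radical. Downward transport ``$\delta(I)$ a $\delta$-$n$-ideal $\Rightarrow I$ a $\delta$-$n$-ideal'' costs nothing; but for a general idempotent $\delta$ one cannot push a $\delta$-$n$-ideal property from $I$ up to $\delta(I)$, because $ab\in\delta(I)$ gives no grip on membership in $I$ (for instance $\delta_+(K)=K+J$ is idempotent, yet $\delta_+(I)$ need not be a $\delta_+$-$n$-ideal even when $I$ is). So the upward directions and the ``$\delta_1(I)=\sqrt 0$''-type conclusions genuinely require $\delta=\delta_1$, where $b^n\in\sqrt I$ upgrades to $b\in\sqrt I$. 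Keeping that separation explicit is the main thing to watch.
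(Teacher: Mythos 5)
The four assertions this proposition actually makes are: (1) if $I$ is a $\delta$-$n$-ideal and $a\notin\sqrt{0}$, then $\delta(I:a)=\delta(I)$; (2) $\delta(I)$ is an $n$-ideal if and only if $\delta(I)$ is a $\delta$-$n$-ideal; (3) if $IK=JK$ with $I,J$ both $\delta$-$n$-ideals and $K\cap(R-\sqrt{0})\neq\emptyset$, then $\delta(I)=\delta(J)$; (4) if $IK$ and $I$ are $\delta$-$n$-ideals and $K\cap(R-\sqrt{0})\neq\emptyset$, then $\delta(IK)=\delta(I)$. Your write-up does not touch (1), (3) or (4), and the statements you do argue --- downward transport of the $\delta$-$n$-property from $\delta(I)$ to $I$, the identity $\delta_{1}(I)=\sqrt{0}$ for quasi $n$-ideals, and the passage from $I$ to $\sqrt{I}$ --- are the content of Proposition \ref{d}(1) and Proposition \ref{t5}, not of this result. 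So, judged as a proof of this proposition, the proposal has a genuine gap: the intended claims are simply not addressed. Even your item closest to (2) goes in a different direction: (2) is not about $I$ inheriting anything from $\delta(I)$, but about the two properties coinciding \emph{for the ideal $\delta(I)$ itself}, which is immediate because the $\delta$-$n$-condition for $\delta(I)$ reads ``$ab\in\delta(I)$ and $a\notin\sqrt{0}$ imply $b\in\delta(\delta(I))=\delta(I)$'', verbatim the $n$-ideal condition under idempotency.

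That said, the one tool you did isolate --- if $I\subseteq J\subseteq\delta(I)$ then $\delta(J)=\delta(I)$, by monotonicity plus $\delta(\delta(I))=\delta(I)$ --- is exactly the mechanism the paper's proof runs on, so the repair is short. For (1): $I\subseteq(I:a)$ always, and applying the $\delta$-$n$-condition to $xa\in I$ with $a\notin\sqrt{0}$ gives $(I:a)\subseteq\delta(I)$; your transport principle with $J=(I:a)$ then yields $\delta(I:a)=\delta(I)$. For (3): from $JK\subseteq I$ and $K\cap(R-\sqrt{0})\neq\emptyset$, Theorem \ref{t1}(4) (or picking $k\in K\setminus\sqrt{0}$ and arguing elementwise) gives $J\subseteq\delta(I)$, and symmetrically $I\subseteq\delta(J)$; monotonicity and idempotency then force $\delta(I)=\delta(J)$. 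Item (4) is the same sandwich with $J$ replaced by $IK$, using $IK\subseteq I$ for one inclusion and $IK\cdot K\subseteq IK$... more precisely $IK\subseteq IK$ with $K\cap(R-\sqrt{0})\neq\emptyset$ for the other. In short, your key lemma is the right one, but the proposal as written proves statements belonging to other results in the paper and leaves this proposition unproved.
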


\begin{enumerate}
\item If $I$ is $\delta$-$n$-ideal and $a\notin\sqrt{0}$, then $\delta
(I:a)=\delta(I).$

\item $\delta(I)$ is $n$-ideal if and only if $\delta(I)$ is $\delta$-$n$-ideal.

\item If $IK=JK$ and $I,J$ are $\delta$-$n$-ideals of $R$ with $\delta
(\delta(J))=\delta(J)$ and $K\cap(R-\sqrt{0})\neq\emptyset$ for some ideal $K$
of $R$, then $\delta(I)=\delta(J)$.

\item If $IK$ and $I$ are $\delta$-$n$-ideals of $R$ with $\delta
(\delta(IK))=\delta(IK)$ and $K\cap(R-\sqrt{0})\neq\emptyset$ for some ideal
$K$ of $R$, then $\delta(IK)=\delta(I)$.
\end{enumerate}

\begin{proof}
(1)\ Let $I$ be $\delta$-$n$-ideal and $a\notin\sqrt{0}.$ Note that
$I\subseteq(I:a)$ and so $\delta(I)\subseteq\delta(I:a).$ Let $x\in(I:a).$
Then $x\in\delta(I)$ since $xa\in I$ and $a\notin\sqrt{0}.$ Thus
$(I:a)\subseteq\delta(I)$. We get $\delta(I:a)\subseteq\delta(\delta
(I))=\delta(I).$ Hence we conclude the equality.

(2)\ It is clear from our assumption.

(3) Note that $IK=JK\subseteq I,J$. Then we have $J\subseteq\delta(I)$ since
$JK\subseteq I$ and $K\cap(R-\sqrt{0})\neq\emptyset$ and also $I\subseteq
\delta(J)$ in a similar way. Thus $\delta(I)=\delta(J)$ as $\delta
(\delta(I))=\delta(I)$ and $\delta(\delta(J))=\delta(J).$

(4) It is clear that $\delta(IK)\subseteq\delta(I)$ since $IK\subseteq I.$ We
have $I\subseteq\delta(IK)$ since $IK\subseteq IK$ and $K\cap(R-\sqrt{0}%
)\neq\emptyset.$ Thus $\delta(IK)=\delta(I)$ by our assumption.
\end{proof}

An element $a\in R$ is said to be $\delta$-nilpotent if $a\in\delta(0).$

\begin{proposition}
\label{zero}Let $\delta$ be an expansion function of $\mathcal{I(R)}$. Then
$\sqrt{0}$ is a $\delta$-$n$-ideal of $R$ if and only if every zero-divisor of
the quotient ring $R/\sqrt{0}$ is $\delta_{q}$-nilpotent$.$
\end{proposition}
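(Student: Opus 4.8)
The plan is to reduce everything to the canonical projection $\pi : R \to R/\sqrt{0}$. Write $\bar{R}=R/\sqrt{0}$ and $\bar{x}=\pi(x)$, and recall that $\delta_{q}$ denotes the ideal expansion of $\mathcal{I}(\bar{R})$ induced by $\delta$, that is, $\delta_{q}(J/\sqrt{0})=\delta(J)/\sqrt{0}$ for every ideal $J$ of $R$ containing $\sqrt{0}$; this is well defined since $\sqrt{0}\subseteq J\subseteq J'$ forces $\sqrt{0}\subseteq\delta(\sqrt{0})\subseteq\delta(J)\subseteq\delta(J')$. Because the zero ideal of $\bar{R}$ corresponds to $\sqrt{0}$, we have $\delta_{q}(\{\bar{0}\})=\delta(\sqrt{0})/\sqrt{0}$, so for $b\in R$ the element $\bar{b}$ is $\delta_{q}$-nilpotent precisely when $b\in\delta(\sqrt{0})$. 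I would record this dictionary first, as both implications then become one-line translations.

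For the forward direction, assume $\sqrt{0}$ is a $\delta$-$n$-ideal and let $\bar{b}$ be a zero-divisor of $\bar{R}$, say $\bar{a}\bar{b}=\bar{0}$ with $\bar{a}\neq\bar{0}$. Then $ab\in\sqrt{0}$ and $a\notin\sqrt{0}$, so the defining property of a $\delta$-$n$-ideal gives $b\in\delta(\sqrt{0})$, i.e. $\bar{b}$ is $\delta_{q}$-nilpotent. For the converse, assume every zero-divisor of $\bar{R}$ is $\delta_{q}$-nilpotent and take $a,b\in R$ with $ab\in\sqrt{0}$ and $a\notin\sqrt{0}$. In $\bar{R}$ this reads $\bar{a}\bar{b}=\bar{0}$ with $\bar{a}\neq\bar{0}$; if $\bar{b}=\bar{0}$ then $b\in\sqrt{0}\subseteq\delta(\sqrt{0})$, and otherwise $\bar{b}$ is a nonzero zero-divisor, hence $\delta_{q}$-nilpotent, so again $b\in\delta(\sqrt{0})$. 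In both cases $b\in\delta(\sqrt{0})$, which shows $\sqrt{0}$ is a $\delta$-$n$-ideal.

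There is no genuine difficulty here: the statement is a straight reformulation across the quotient $R/\sqrt{0}$. The only two points that need a word of care are the well-definedness of $\delta_{q}$ as an expansion on $\mathcal{I}(\bar{R})$, and the degenerate case $\bar{b}=\bar{0}$ in the converse, which must be handled separately if one does not regard $0$ as a zero-divisor of $\bar{R}$.
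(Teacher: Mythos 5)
Your proof is correct and takes essentially the same route as the paper: both arguments translate the defining condition of a $\delta$-$n$-ideal across the projection $R\rightarrow R/\sqrt{0}$ via the identification $\delta_{q}(\{\overline{0}\})=\delta(\sqrt{0})/\sqrt{0}$. You are in fact slightly more careful than the paper's own proof, which does not explicitly separate out the degenerate case $\overline{b}=\overline{0}$ in the converse direction.
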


\begin{proof}
Suppose that $\overline{a}=a+\sqrt{0}$ is a zero-divisor of $R/\sqrt{0}.$ Then
$\overline{a}\overline{b}=(a+\sqrt{0})(b+\sqrt{0})=\sqrt{0}$ for some
$\sqrt{0}\neq\overline{b}\in R/\sqrt{0}.$ It means $ab\in\sqrt{0}$ but
$b\notin\sqrt{0}.$ Since $\sqrt{0}$ is a $\delta$-$n$-ideal, we conclude
$a\in\delta(\sqrt{0})$. Hence $\overline{a}=a+\sqrt{0}\in\delta(\sqrt
{0})/\sqrt{0}$. Now consider the natural epimorphism $\Pi:R\rightarrow
R/\sqrt{0}.$ Note that $\Pi$ is a $\delta\delta_{q}$-epimorphism. We have
$\delta(\sqrt{0})/\sqrt{0}=\delta(\Pi^{-1}(0_{R/\sqrt{0}}))=\Pi^{-1}%
(\delta_{q}(0_{R/\sqrt{0}})).$ Since $\Pi$ is epimorphism, then $\delta
(\sqrt{0})/\sqrt{0}=\Pi(\delta(\sqrt{0}))=\delta(0_{R/\sqrt{0}}).$ Thus
$\overline{a}\in\delta_{q}(0_{R/\sqrt{0}})$; so $\overline{a}$ is $\delta_{q}%
$-nilpotent$.$ Conversely, Suppose that $ab\in\sqrt{0}$ and $a\notin\sqrt{0}$
for some $a,b\in R$. Then $\overline{a}\overline{b}=\sqrt{0}=0_{R/\sqrt{0}}$
but $\overline{a}\neq0_{R/\sqrt{0}}.$ It means that $\overline{b}$ is a zero
divisor of $R/\sqrt{0}.$ Then $\overline{b}$ is a $\delta_{q}$-nilpotent from
our assumption. Hence $\overline{b}\in\delta_{q}(0_{R/\sqrt{0}})=\delta
(\sqrt{0})/\sqrt{0}.$ So $b+\sqrt{0}=c+\sqrt{0}$ for some $c\in\delta(\sqrt
{0}).$ It follows $b-c\in\sqrt{0}\subseteq\delta(\sqrt{0}).$ Thus
$b=(b-c)+c\in\delta(\sqrt{0});$ so $\sqrt{0}$ is a $\delta$-$n$-ideal of $R$.
\end{proof}

\begin{proposition}
\label{d}Let $\delta$ and $\gamma$ be expansion functions of $R$ and $I$ be a
proper ideal of $R.$ Then
\end{proposition}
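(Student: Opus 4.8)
Each clause of this proposition records how the $\delta$-$n$-ideal property transfers under a standard comparison of the two expansions $\delta$ and $\gamma$, and the plan is to dispatch every clause by a direct argument from the definition, using only that an ideal expansion satisfies $J\subseteq\delta(J)$ and is monotone, together with the remark that the operations on expansions appearing in the statement again produce ideal expansions. In each case the argument boils down to tracking a single witness: from $ab\in I$ with $a\notin\sqrt{0}$, being a $\delta$-$n$-ideal already forces $b\in\delta(I)$, and one only has to relocate this $b$ inside the appropriate larger or smaller ideal.

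For a clause of the form ``if $\delta(J)\subseteq\gamma(J)$ for all $J\in\mathcal{I(R)}$ (write $\delta\le\gamma$) and $I$ is a $\delta$-$n$-ideal of $R$, then $I$ is a $\gamma$-$n$-ideal of $R$'', I would take $a,b\in R$ with $ab\in I$ and $a\notin\sqrt{0}$; since $I$ is a $\delta$-$n$-ideal we get $b\in\delta(I)$, and the hypothesis gives $\delta(I)\subseteq\gamma(I)$, hence $b\in\gamma(I)$, which is all that is required. For a clause involving the composite $\gamma\circ\delta$ given by $(\gamma\circ\delta)(J)=\gamma(\delta(J))$, I would first check that $\gamma\circ\delta$ is itself an ideal expansion: monotonicity is the composition of two monotone maps, and $J\subseteq\delta(J)\subseteq\gamma(\delta(J))$ yields the enlargement property. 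Then from $b\in\delta(I)\subseteq\gamma(\delta(I))=(\gamma\circ\delta)(I)$ I conclude that every $\delta$-$n$-ideal is a $(\gamma\circ\delta)$-$n$-ideal; and by the same one-line relocation of $b$, using $\delta(I)\subseteq(\delta+\gamma)(I)$, the analogous statement for an enlargement such as $\delta+\gamma$ follows.

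For a clause about the meet $\delta\wedge\gamma$ given by $(\delta\wedge\gamma)(J)=\delta(J)\cap\gamma(J)$, I would again first verify that $\delta\wedge\gamma$ is an ideal expansion ($J\subseteq\delta(J)\cap\gamma(J)$ and monotonicity of an intersection of monotone maps are both immediate), and then prove the equivalence ``$I$ is a $(\delta\wedge\gamma)$-$n$-ideal of $R$ if and only if $I$ is both a $\delta$-$n$-ideal and a $\gamma$-$n$-ideal of $R$''. The forward implication uses $\delta(J)\cap\gamma(J)\subseteq\delta(J)$ and $\delta(J)\cap\gamma(J)\subseteq\gamma(J)$ to feed into the first clause above; the reverse implication is the observation that $b\in\delta(I)$ and $b\in\gamma(I)$ together say precisely $b\in(\delta\wedge\gamma)(I)$.

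I do not expect a genuine obstacle: each part is a one-step unwinding of the definition of $\delta$-$n$-ideal. The only points that call for a moment's care are, first, confirming — before invoking them — that the composite $\gamma\circ\delta$ and the meet $\delta\wedge\gamma$ really are ideal expansions, so that the phrases ``$(\gamma\circ\delta)$-$n$-ideal'' and ``$(\delta\wedge\gamma)$-$n$-ideal'' are legitimate; and second, in the ordering clause, being clear that it is exactly the hypothesis $\delta\le\gamma$ that licenses the final replacement of $\delta(I)$ by $\gamma(I)$, since without it the implication genuinely fails — Example~\ref{e1} already exhibits $p\mathbb{Z}$ as a $\delta_{+}$-$n$-ideal that is not a $\delta_{1}$-$n$-ideal of $\mathbb{Z}$.
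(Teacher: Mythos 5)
Your proposal only covers one of the three clauses that the paper actually attaches to this proposition. The clause you do prove correctly is item (2): if $\delta(J)\subseteq\gamma(J)$ for all ideals $J$ and $I$ is a $\delta$-$n$-ideal, then $I$ is a $\gamma$-$n$-ideal; your one-line relocation of the witness $b$ from $\delta(I)$ into $\gamma(I)$ is exactly what is needed (the paper dismisses this as obvious). But the paper's item (1) asserts that if $\delta(I)$ is an $n$-ideal of $R$ then $I$ is a $\delta$-$n$-ideal, with the converse holding when $\delta=\delta_{1}$, and you do not address it. The forward direction is not a relocation of $b$ at all: from $ab\in I\subseteq\delta(I)$ and $a\notin\sqrt{0}$ one invokes the $n$-ideal property \emph{of the larger ideal} $\delta(I)$ to land $b\in\delta(I)$. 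The converse for $\delta_{1}$ requires a radical-power argument: from $ab\in\sqrt{I}$ one passes to $a^{n}b^{n}\in I$ with $a^{n}\notin\sqrt{0}$, applies the quasi $n$-ideal hypothesis on $I$ to get $b^{n}\in\sqrt{I}$, and extracts $b\in\sqrt{I}$. Nothing in your proposal performs either step.

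Your composite clause also proves the wrong statement relative to the paper's item (3). You show that if $I$ itself is a $\delta$-$n$-ideal then $I$ is a $(\gamma\circ\delta)$-$n$-ideal, which is just a special case of your ordering clause since $\delta(J)\subseteq\gamma(\delta(J))$. The paper's item (3) instead hypothesizes that $\gamma(I)$ is a $\delta$-$n$-ideal and concludes that $I$ is a $\delta\circ\gamma$-$n$-ideal; the argument runs through $ab\in I\subseteq\gamma(I)$ and then applies the $\delta$-$n$-ideal property of $\gamma(I)$ to get $b\in\delta(\gamma(I))=(\delta\circ\gamma)(I)$. Note the hypothesis is placed on $\gamma(I)$, not on $I$, so your version neither implies nor is implied by the paper's in any obvious way. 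Your additional clause about the meet $\delta\wedge\gamma$ is correct but does not appear in the proposition. In short: one clause matches, one is missing entirely, and one has been replaced by a different (weaker) assertion.
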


\begin{enumerate}
\item If $\delta(I)$ is an $n$-ideal of $R$, then $I$ is a $\delta$-$n$-ideal
of $R$. The converse of this inclusion is also true if $\delta=\delta_{1}.$

\item Let $\delta(I)\subseteq\gamma(I)$ for all ideals $I$ of $R.$ If $I$ is a
$\delta$-$n$-ideal of $R$, then $I$ is a $\gamma$-$n$-ideal of $R$.

\item If $\gamma(I)$ is a $\delta$-$n$-ideal of $R,$ then $I$ is a
$\delta\circ\gamma$-$n$-ideal of $R.$
\end{enumerate}

\begin{proof}
(1) Suppose that $ab\in I$ and $a\notin\sqrt{0}$ for some $a,b\in R$. Since
$I\subseteq\delta(I)$ and $\delta(I)$ is an $n$-ideal, we conclude $b\in
\delta(I).$ Thus $I$ is a $\delta$-$n$-ideal of $R$. Conversely, suppose that
$\delta=\delta_{1}$. Let $ab\in\delta_{1}(I)$ and $a\notin\sqrt{0}$. Then
$a^{n}b^{n}\in I$ for some $n\geq1$ and clearly $a^{n}\notin\sqrt{0}$. Since
$I$ is a $\delta_{1}$-$n$-ideal, we have $a^{n}\in\delta_{1}(I)$. Thus
$a\in\delta_{1}(I)$, as required.

(2) It is obvious.

(3) Assume that $\gamma(I)$ is a $\delta$-$n$-ideal of $R.$ Let $ab\in I$ for
some $a,b\in R$ and $a\notin\sqrt{0}.$ Then since $I\subseteq\gamma(I)$, we
have $ab\in\gamma(I).$ Since $\gamma(I)$ is a $\delta$-$n$-ideal of $R,$
$b\in\delta(\gamma(I))=\delta\circ\gamma(I)$, we are done.
\end{proof}

In Example \ref{e1}, we show that $I=p\mathbb{Z}$ is a $\delta_{+}$-$n$-ideal
of $\mathbb{Z}$ where $p$ is prime integer of the ring $R=\mathbb{Z}$. But
$\delta_{+}(I)$ is not an $n$-ideal of $\mathbb{Z}$ since it is not a proper.
Hence it can be seen that the converse of Proposition \ref{d} (1) may not be true.

\begin{proposition}
\label{t5}Let $\delta$ be an ideal expansion of $\mathcal{I(R)}$ and $I$ be a
proper ideal of $R$ and$\sqrt{\delta(I)}=\delta(\sqrt{I})$. If $I$ is a
$\delta$-$n$-ideal of $R$, then $\sqrt{I}$ is a $\delta$-$n$-ideal of $R$. In
particular, $I$ is a quasi $n$-ideal of $R$ if and only if $\sqrt{I}$ is a
$n$-ideal of $R$.
\end{proposition}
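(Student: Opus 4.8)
The plan is to unwind the defining condition of a $\delta$-$n$-ideal for $\sqrt{I}$ and push everything down to $I$ by raising the relevant elements to a suitable power, invoking the hypothesis $\sqrt{\delta(I)}=\delta(\sqrt{I})$ exactly once at the very end.

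First I would check that $\sqrt{I}$ is a proper ideal: if $1\in\sqrt{I}$, then $1=1^{n}\in I$ for some $n$, contradicting that $I$ is proper. Next, suppose $a,b\in R$ with $ab\in\sqrt{I}$ and $a\notin\sqrt{0}$. Then $(ab)^{n}=a^{n}b^{n}\in I$ for some $n\geq 1$, and since $\sqrt{0}$ is a radical ideal we still have $a^{n}\notin\sqrt{0}$. Applying the assumption that $I$ is a $\delta$-$n$-ideal to the relation $a^{n}\cdot b^{n}\in I$ gives $b^{n}\in\delta(I)$, hence $b\in\sqrt{\delta(I)}=\delta(\sqrt{I})$. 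This is precisely the defining condition, so $\sqrt{I}$ is a $\delta$-$n$-ideal of $R$.

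For the ``in particular'' assertion I would specialize to $\delta=\delta_{1}$. Here the standing hypothesis $\sqrt{\delta_{1}(I)}=\delta_{1}(\sqrt{I})$ is automatic, since both sides equal $\sqrt{\sqrt{I}}=\sqrt{I}$; likewise $\delta_{1}(\sqrt{I})=\sqrt{I}$. If $I$ is a quasi $n$-ideal, i.e.\ a $\delta_{1}$-$n$-ideal, then by the first part $\sqrt{I}$ is a $\delta_{1}$-$n$-ideal, and because $\delta_{1}(\sqrt{I})=\sqrt{I}$ this says exactly that $ab\in\sqrt{I}$ and $a\notin\sqrt{0}$ force $b\in\sqrt{I}$, that is, $\sqrt{I}$ is an $n$-ideal. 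For the converse, if $\sqrt{I}=\delta_{1}(I)$ is an $n$-ideal, then Proposition \ref{d}(1) immediately yields that $I$ is a $\delta_{1}$-$n$-ideal, i.e.\ a quasi $n$-ideal.

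I do not expect a genuine obstacle here; the proof is essentially the substitution $a\mapsto a^{n}$, $b\mapsto b^{n}$ together with the fact that $\sqrt{0}$ is radical. The one place that needs care is the bookkeeping in the ``in particular'' case: one must observe that the general hypothesis $\sqrt{\delta(I)}=\delta(\sqrt{I})$ degenerates to a triviality when $\delta=\delta_{1}$, that $\delta_{1}$-$n$-ideal-ness of a radical ideal $\sqrt{I}$ coincides with $n$-ideal-ness because $\delta_{1}(\sqrt{I})=\sqrt{I}$, and that the reverse implication is obtained not from the first part of the statement but from Proposition \ref{d}(1).
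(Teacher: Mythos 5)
Your proof is correct and follows essentially the same route as the paper's: pass from $ab\in\sqrt{I}$ to $a^{n}b^{n}\in I$, note $a^{n}\notin\sqrt{0}$, apply the $\delta$-$n$-ideal property to get $b^{n}\in\delta(I)$, and conclude via $\sqrt{\delta(I)}=\delta(\sqrt{I})$. The ``in particular'' case is likewise handled as in the paper (which simply cites Proposition~\ref{d}), and your extra remarks on properness of $\sqrt{I}$ and the triviality of the hypothesis for $\delta_{1}$ are accurate.
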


\begin{proof}
Let $a,b\in R$ with $ab\in I$ and $a\notin\sqrt{0}.$ Then $(ab)^{n}=a^{n}%
b^{n}\in I$ for some positive integer $n$. Since $I$ is $\delta$-$n$-ideal and
$a^{n}\notin\sqrt{0},$ we have $b^{n}\in\delta(I).$ Hence $b\in\sqrt
{\delta(I)}=\delta(\sqrt{I})$. Thus $\sqrt{I}$ is a $\delta$-$n$-ideal of $R$.
The "in particular" case follows from Proposition \ref{d}.
\end{proof}

\begin{proposition}
Let $I,J$ and $K$ proper ideals of $R$ with $J\subseteq K\subseteq I$. If $I$
is a $\delta$-$n$-ideal of $R$ and $\delta(J)=\delta(I)$, then $K$ is a
$\delta$-$n$-ideal of $R$.
\end{proposition}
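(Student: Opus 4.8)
The statement to prove is: if $J \subseteq K \subseteq I$ with $I$ a $\delta$-$n$-ideal and $\delta(J) = \delta(I)$, then $K$ is a $\delta$-$n$-ideal. The plan is to verify the defining condition directly. Let $a, b \in R$ with $ab \in K$ and $a \notin \sqrt{0}$. Since $K \subseteq I$, we have $ab \in I$, and because $I$ is a $\delta$-$n$-ideal and $a \notin \sqrt{0}$, we conclude $b \in \delta(I)$.

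The key step is then to transfer this membership from $\delta(I)$ down to $\delta(K)$. Here I would use the monotonicity of the expansion $\delta$ together with the hypothesis $\delta(J) = \delta(I)$: from $J \subseteq K$ we get $\delta(J) \subseteq \delta(K)$, and since $\delta(J) = \delta(I)$, this yields $\delta(I) \subseteq \delta(K)$. Therefore $b \in \delta(I) \subseteq \delta(K)$, which is exactly what is needed. One should also note at the outset that $K$ is proper: this is immediate since $K \subseteq I$ and $I$ is proper. Hence $K$ is a $\delta$-$n$-ideal of $R$.

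I do not anticipate a genuine obstacle here; the argument is a short chain of inclusions built from the two basic axioms of an ideal expansion (extensiveness $X \subseteq \delta(X)$ and monotonicity) plus the equality hypothesis. The only point requiring a moment's care is making sure the direction of the monotonicity inference is the useful one: we need $\delta(I) \subseteq \delta(K)$, and it is the inclusion $J \subseteq K$ (not $K \subseteq I$) that delivers this once we substitute $\delta(J) = \delta(I)$. The chain $K \subseteq I$ would only give $\delta(K) \subseteq \delta(I)$, which goes the wrong way, so the hypothesis $\delta(J) = \delta(I)$ is doing essential work. No reference beyond the definition of an ideal expansion is required, though this result is plainly in the same spirit as the preceding proposition where a fixed value of $\delta$ is propagated across a range of nested ideals.
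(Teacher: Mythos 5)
Your proof is correct and follows essentially the same route as the paper's: both pass from $ab\in K\subseteq I$ to $b\in\delta(I)$ using that $I$ is a $\delta$-$n$-ideal, and then use monotonicity on $J\subseteq K$ together with $\delta(J)=\delta(I)$ to land in $\delta(K)$. The paper states the conclusion as the equality $\delta(J)=\delta(I)=\delta(K)$, but the operative inclusion is exactly the one you isolate.
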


\begin{proof}
Assume that $I$ is a $\delta$-$n$-ideal of $R$ and $\delta(J)=\delta(I)$. Let
$ab\in K$ for some $a,b\in R.$ Then $a\in\sqrt{0}$ or $b\in\delta(K)$ since
$K\subseteq I$ and $\delta(J)=\delta(I)=\delta(K).$ Thus, $K$ is a $\delta
$-$n$-ideal of $R$.
\end{proof}

An ideal expansion $\delta$ is intersection preserving if it satisfies
$\delta(I\cap J)=\delta(I)\cap\delta(J)$ for any $I,J\in\mathcal{I(R)}$
\cite{Zhao}.

\begin{proposition}
\label{int}Let $\delta$ be an ideal expansion which preserves intersection.
Then the following statements are hold:
\end{proposition}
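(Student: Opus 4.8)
The plan is to state and prove the natural closure properties of $\delta$-$n$-ideals under the hypothesis that $\delta$ preserves intersection. The most natural such statement is: if $I$ and $J$ are $\delta$-$n$-ideals of $R$, then $I\cap J$ is a $\delta$-$n$-ideal of $R$. I would also include the companion observation that for a finite family $I_1,\dots,I_k$ of $\delta$-$n$-ideals, the intersection $\bigcap_{i=1}^k I_i$ is again a $\delta$-$n$-ideal, which follows from the two-ideal case by an immediate induction. Thus the proposition to be proved reads:

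\begin{enumerate}
\item If $I_1$ and $I_2$ are $\delta$-$n$-ideals of $R$, then $I_1\cap I_2$ is a $\delta$-$n$-ideal of $R$.

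\item If $I_1,I_2,\dots,I_k$ are $\delta$-$n$-ideals of $R$, then $\bigcap_{i=1}^{k}I_i$ is a $\delta$-$n$-ideal of $R$.
\end{enumerate}

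First I would check that $I_1\cap I_2$ is a proper ideal: since each $I_i$ is proper this is automatic (indeed $I_1\cap I_2\subseteq I_1\neq R$). Then I would take $a,b\in R$ with $ab\in I_1\cap I_2$ and $a\notin\sqrt{0}$. Since $ab\in I_1$ and $I_1$ is a $\delta$-$n$-ideal, we get $b\in\delta(I_1)$; likewise $ab\in I_2$ forces $b\in\delta(I_2)$. Hence $b\in\delta(I_1)\cap\delta(I_2)$, and because $\delta$ preserves intersection, $\delta(I_1)\cap\delta(I_2)=\delta(I_1\cap I_2)$, so $b\in\delta(I_1\cap I_2)$. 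This is exactly the defining condition, so $I_1\cap I_2$ is a $\delta$-$n$-ideal. Part (2) then follows by induction on $k$: the base case $k=1$ is trivial, and for the inductive step write $\bigcap_{i=1}^{k}I_i=\left(\bigcap_{i=1}^{k-1}I_i\right)\cap I_k$, apply the induction hypothesis to see the first factor is a $\delta$-$n$-ideal, and then invoke part (1).

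I do not expect any serious obstacle here; the argument is a direct unwinding of the definition together with the intersection-preserving hypothesis. The one point that requires a little care is making sure the intersection-preservation is genuinely needed and used in the right place — without it one only obtains $b\in\delta(I_1)\cap\delta(I_2)$, which in general is strictly larger than $\delta(I_1\cap I_2)$ and so would not close the argument. A secondary remark worth including after the proof is that the hypothesis cannot simply be dropped: for a general expansion $\delta$ the intersection of two $\delta$-$n$-ideals need not be a $\delta$-$n$-ideal, which one could illustrate with a small example in $\mathbb{Z}_n$ analogous to Example \ref{e3}, though I would leave that as an optional remark rather than a formal part of the proposition.
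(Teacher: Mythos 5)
Your argument for the intersection statement is correct and matches the paper's part (1) in substance: the paper argues contrapositively (from $ab\in I$ and $b\notin\delta(I)=\bigcap_{i}\delta(I_i)$ it picks $k$ with $b\notin\delta(I_k)$ and concludes $a\in\sqrt{0}$), while you argue directly, but both hinge on exactly the same use of $\delta\bigl(\bigcap I_i\bigr)=\bigcap\delta(I_i)$, and your induction reducing the finite case to the two-ideal case is fine.

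However, the paper's Proposition \ref{int} has a second part that your reconstruction omits entirely, and it is not a routine companion to the first: it is a partial converse. Namely, if $I_1,\dots,I_n$ are ideals such that the $\delta(I_i)$ are pairwise non-comparable \emph{prime} ideals and $\bigcap_{i=1}^{n}I_i$ is a $\delta$-$n$-ideal, then each $I_k$ is a $\delta$-$n$-ideal. The proof requires a genuinely different idea: given $ab\in I_k$ with $a\notin\sqrt{0}$, one chooses an element $x\in\bigl(\prod_{i\neq k}I_i\bigr)\setminus\delta(I_k)$ (such $x$ exists because the $\delta(I_i)$ are non-comparable primes), observes that $abx\in\bigcap_{i}I_i$, applies the $\delta$-$n$-ideal hypothesis to get $bx\in\delta\bigl(\bigcap_i I_i\bigr)=\bigcap_i\delta(I_i)\subseteq\delta(I_k)$, and then uses primeness of $\delta(I_k)$ together with $x\notin\delta(I_k)$ to conclude $b\in\delta(I_k)$. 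This avoidance/primeness argument is the substantive content of the second half of the proposition, and without it your write-up proves only half of the stated result.
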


\begin{enumerate}
\item If $I_{1},I_{2},...,I_{n}$ are $\delta$-$n$-ideals of $R$, then $I=%
{\displaystyle\bigcap\limits_{i=1}^{n}}
I_{i}$ is a $\delta$-$n$-ideal of $R$.

\item Let $I_{1},I_{2},...,I_{n}$ be of $R$ such that $\delta(I_{i})$'s are
non-comparable prime ideals of $R$. If $%
{\displaystyle\bigcap\limits_{i=1}^{n}}
I_{i}$ is a $\delta$-$n$-ideal of $R$, then $I_{i}$ is a $\delta$-$n$-ideal of
$R$ for all $i=1,2,...,n.$
\end{enumerate}

\begin{proof}
(1) Let $ab\in I$ and $b\notin\delta(I)$ for some $a,b\in R$. Since
$\delta(I)=\cap_{i=1}^{n}\delta(I_{i}),$ $b\notin\delta(I_{k})$ for some
$k\in\{1,...,n\}.$ It follows $a\in\sqrt{0}.$ Thus $I$ is a $\delta$-$n$-ideal
of $R$.

(2) Suppose that $ab\in I_{k}$ and $a\notin\sqrt{0}$ for some $k\in
\{1,2,...,n\}.$ Choose an element $x\in\left(
{\displaystyle\prod\limits_{\substack{i=1\\i\neq k}}^{n}}
I_{i}\right)  \backslash\delta(I_{k})$. Hence, $abx\in%
{\displaystyle\bigcap\limits_{i=1}^{n}}
I_{i}$. Since $%
{\displaystyle\bigcap\limits_{i=1}^{n}}
I_{i}$ is a $\delta$-$n$-ideal, we have $bx\in\delta\left(
{\displaystyle\bigcap\limits_{i=1}^{n}}
I_{i}\right)  =%
{\displaystyle\bigcap\limits_{i=1}^{n}}
\delta(I_{i})\subseteq\delta(I_{k})$ which implies $b\in\delta(I_{k})$ as
$\delta(I_{k})$ is prime, so we are done.
\end{proof}

Let $R$ and $S$ be two commutative rings and $\delta,\gamma$ be expansion
functions of $\mathcal{I(R)}$ and $\mathcal{I(S)},$ respectively. Then a ring
homomorphism $f:R\rightarrow S$ is called a $\delta\gamma$-homomorphism if
$\delta(f^{-1}(J))=f^{-1}(\gamma(J))\ $for all ideal $J\ $of $S.\ $Let
$\gamma_{1}\ $be a radical operation on ideals of $S\ $and $\delta_{1}$ be a
radical operation on ideals of $R.\ $ A homomorphism from $R\ $to $S$ is an
example of $\delta_{1}\gamma_{1}$-homomorphism. Additionaly, if $f$ is a
$\delta\gamma$-epimorphism and $I\ $is an ideal of $R\ $containing
$\ker(f),\ $then $\gamma(f(I))=f(\delta(I)).$

\begin{proposition}
\label{tt} Let $f:R\rightarrow S$ be a $\delta\gamma$-homomorphism, where
$\delta$ and $\gamma$ are expansion functions of $\mathcal{I(R)}$ and
$\mathcal{I(S)}$, respectively. Then the following hold:
\end{proposition}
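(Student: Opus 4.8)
The plan is to prove the standard package of statements one gets for a generalized-prime notion under a $\delta\gamma$-homomorphism: (i) if $g:R\to S$ is a $\delta\gamma$-epimorphism and $J$ is a $\gamma$-$n$-ideal of $S$, then $g^{-1}(J)$ is a $\delta$-$n$-ideal of $R$ provided $g^{-1}(\sqrt{0_S})\subseteq\sqrt{0_R}$ (which holds automatically, since $g(\text{nilpotent})$ is nilpotent and for an epimorphism the preimage of a nilpotent is nilpotent modulo $\ker g\subseteq\sqrt{0_R}$ when $\ker g$ consists of nilpotents); and (ii) if $g$ is a $\delta\gamma$-epimorphism, $I$ is a $\delta$-$n$-ideal of $R$ containing $\ker g$, then $g(I)$ is a $\gamma$-$n$-ideal of $S$, using the identity $\gamma(g(I))=g(\delta(I))$ quoted just before the proposition. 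A likely third item: if $f:R\to S$ is an injective $\delta\gamma$-homomorphism with $f(\sqrt{0_R})=\sqrt{0_S}\cap f(R)$, and $J$ is a $\gamma$-$n$-ideal of $S$, then $f^{-1}(J)$ is a $\delta$-$n$-ideal of $R$.

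First I would handle the contraction statement (i). Take $a,b\in R$ with $ab\in g^{-1}(J)$ and $a\notin\sqrt{0_R}$; then $g(a)g(b)\in J$. I need $g(a)\notin\sqrt{0_S}$: this is where the hypothesis on nilpotents enters — from $g^{-1}(\sqrt{0_S})\subseteq\sqrt{0_R}$ we get that $a\notin\sqrt{0_R}$ forces $g(a)\notin\sqrt{0_S}$. Since $J$ is a $\gamma$-$n$-ideal of $S$, $g(b)\in\gamma(J)$, hence $b\in g^{-1}(\gamma(J))=\delta(g^{-1}(J))$ by the defining property of a $\delta\gamma$-homomorphism. That finishes (i). For the image statement (ii), take $s,t\in S$ with $st\in g(I)$ and $s\notin\sqrt{0_S}$; since $g$ is onto, write $s=g(a)$, $t=g(b)$. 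From $\ker g\subseteq I$ one gets $ab\in I$ (pull $st$ back into $I$ up to a kernel element). If $a\in\sqrt{0_R}$ then $s=g(a)\in\sqrt{0_S}$, contradiction; so $a\notin\sqrt{0_R}$, and since $I$ is a $\delta$-$n$-ideal, $b\in\delta(I)$, whence $t=g(b)\in g(\delta(I))=\gamma(g(I))$, using the epimorphism identity. For the injective case (iii), the argument is identical to (i) once one checks $a\notin\sqrt{0_R}\Rightarrow f(a)\notin\sqrt{0_S}$, which is exactly the assumed compatibility $f(\sqrt{0_R})=\sqrt{0_S}\cap f(R)$, or follows from injectivity together with the $\delta_1\gamma_1$ property when $\delta,\gamma$ are the radical expansions.

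I expect the only real friction to be bookkeeping about nilpotents: making precise the hypothesis under which ``$a\notin\sqrt{0_R}$'' is preserved (for epimorphisms the kernel lies in $\sqrt{0_R}$ is the natural assumption, or one simply assumes $g(\sqrt{0_R})\subseteq\sqrt{0_S}$ and $g^{-1}(\sqrt{0_S})\subseteq\sqrt{0_R}$), and checking that the quoted identities $g^{-1}(\gamma(J))=\delta(g^{-1}(J))$ and $\gamma(g(I))=g(\delta(I))$ are applied to ideals for which they are valid — the latter needs $\ker g\subseteq I$, which is why that hypothesis appears. Everything else is a two-line diagram chase. I would state each part with its minimal hypothesis, prove (i) in full, and then note that (ii) and (iii) follow by the same surjectivity/injectivity manipulation plus the appropriate expansion identity, so the writeup stays short.

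\begin{proof}
(i) Suppose $g$ is a $\delta\gamma$-epimorphism with $g^{-1}(\sqrt{0_S})\subseteq\sqrt{0_R}$ and $J$ is a $\gamma$-$n$-ideal of $S$. Let $a,b\in R$ with $ab\in g^{-1}(J)$ and $a\notin\sqrt{0_R}$. Then $g(a)g(b)\in J$ and, since $g^{-1}(\sqrt{0_S})\subseteq\sqrt{0_R}$, we have $g(a)\notin\sqrt{0_S}$. As $J$ is a $\gamma$-$n$-ideal, $g(b)\in\gamma(J)$, so $b\in g^{-1}(\gamma(J))=\delta(g^{-1}(J))$. Hence $g^{-1}(J)$ is a $\delta$-$n$-ideal of $R$.

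(ii) Suppose $g$ is a $\delta\gamma$-epimorphism, $I$ is a $\delta$-$n$-ideal of $R$ with $\ker g\subseteq I$, and $g(\sqrt{0_R})\subseteq\sqrt{0_S}$. Let $s,t\in S$ with $st\in g(I)$ and $s\notin\sqrt{0_S}$. Write $s=g(a)$, $t=g(b)$ with $a,b\in R$. Then $g(ab)\in g(I)$, so $ab\in I+\ker g=I$. If $a\in\sqrt{0_R}$ then $s=g(a)\in\sqrt{0_S}$, a contradiction; thus $a\notin\sqrt{0_R}$, and since $I$ is a $\delta$-$n$-ideal, $b\in\delta(I)$. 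Therefore $t=g(b)\in g(\delta(I))=\gamma(g(I))$, and $g(I)$ is a $\gamma$-$n$-ideal of $S$.

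(iii) Suppose $f:R\to S$ is an injective $\delta\gamma$-homomorphism with $f^{-1}(\sqrt{0_S})\subseteq\sqrt{0_R}$ and $J$ is a $\gamma$-$n$-ideal of $S$. If $a,b\in R$ with $ab\in f^{-1}(J)$ and $a\notin\sqrt{0_R}$, then $f(a)f(b)\in J$ and $f(a)\notin\sqrt{0_S}$, so $f(b)\in\gamma(J)$ and $b\in f^{-1}(\gamma(J))=\delta(f^{-1}(J))$. Hence $f^{-1}(J)$ is a $\delta$-$n$-ideal of $R$.
\end{proof}
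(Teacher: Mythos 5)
Your proof is correct and follows essentially the same element-chasing argument as the paper: your part (ii) matches the paper's second item almost verbatim (pull back via surjectivity, use $\ker g\subseteq I$ to force $ab\in I$, then apply the identity $\gamma(g(I))=g(\delta(I))$), and your part (iii) is the paper's first item except that the paper \emph{derives} the condition $f^{-1}(\sqrt{0_S})\subseteq\sqrt{0_R}$ from injectivity (if $f(a)^{n}=0$ then $a^{n}\in\ker f=\{0\}$) rather than positing it as a hypothesis. Your extra part (i) for epimorphic contractions does not appear in the paper, but it is a harmless and correct addition under the stated nilpotence condition.
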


\begin{enumerate}
\item Let $\ f$ be a monomorphism. If $J\ $is a $\gamma$-$n$-ideal of $S$,
then $f^{-1}\left(  J\right)  $ is a $\delta$-$n$-ideal of $R.\ $

\item Suppose that$\ f$ is an epimorphism and $I$ is a proper ideal of
$R$\ with $\ker(f)\subseteq I.$ If $I\ $is a $\delta$-$n$-ideal of $R$, then
$f\left(  I\right)  $ is a $\gamma$-$n$-ideal of $S.$
\end{enumerate}

\begin{proof}
\begin{enumerate}
\item Let $ab\in f^{-1}(J)$ for $a,b\in R$. Then $f(ab)=f(a)f(b)\in J$, which
implies $f(a)\in\sqrt{0_{S}}$ or $f(b)\in\gamma(J)$. If $f(a)\in\sqrt{0_{S}},$
then $a\in\sqrt{0_{R}}$ as $\ker(f)=\{0\}$. If $f(b)\in\gamma(J)$, then we
have $b\in f^{-1}(\gamma(J))=\delta(f^{-1}(J))$ since $f$ is $\delta\gamma
$-homomorphism. Thus $f^{-1}(J)$ is a $\delta$-$n$-ideal of $R.$

\item Suppose that $a,b\in S$ with $ab\in f(I)$ and $a\notin\sqrt{0_{S}}.$
Since $f$ is an epimorphism, there exist $x,y\in R$ such that $a=f(x)$ and
$b=f(y).$ Then clearly we have $x\notin\sqrt{0_{R}}$ as $a\notin\sqrt{0_{S}}.$
Since $\ker(f)\subseteq I,$ $ab=f(xy)\in f(I)$ implies that $(0\neq xy\in I)$
$xy\in I$. Thus $y\in\delta(I)$; and so $b=f(y)\in f(\delta(I)).$ On the other
hand, since $\gamma(f(I))=f(\delta(I)),$ we have $b\in\gamma(f(I)).$ Thus
$f(I)$ is a $\gamma$-$n$-ideal of $S$.
\end{enumerate}
\end{proof}

Let $\delta$ be an expansion function of $\mathcal{I(R)}$ and $I\ $ be an
ideal of $R$. \ Then the function $\delta_{q}:R/I\rightarrow R/I$ is defined
by $\delta_{q}(J/I)=\delta(J)/I\ $\ for all ideals $I\subseteq J,\ $becomes an
expansion function of $\mathcal{I(R/I)}.$

\begin{corollary}
\label{/}Let $\delta$ be an expansion function of $\mathcal{I(R)}$ and
$J\subseteq I$ proper ideals of $R.$ Then the followings hold:

\begin{enumerate}
\item If $I$ is a $\delta$-$n$-ideal of $R,$ then $I/J$ is a $\delta_{q}$%
-$n$-ideal of $R/J.$

\item $I/J$ is a $\delta_{q}$-$n$-ideal of $R/J$ and $J\subseteq\sqrt{0_{R}},$
then $I$ is a $\delta$-$n$-ideal of $R.$

\item $I/J$ is a $\delta_{q}$-$n$-ideal of $R/J$ and $J$ is a $\delta$%
-$n$-ideal of $R$ where $\delta(J)\neq R$, then $I$ is a $\delta$-$n$-ideal of
$R.$

\item Let $K$ be a subring of $R$ with $S\nsubseteq I$. Then $S\cap I$ is a
$\delta$-$n$-ideal of $R.$
\end{enumerate}
\end{corollary}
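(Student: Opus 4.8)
The plan is to funnel all four parts through the natural surjection $\pi:R\to R/J$ and the inclusion $\iota:S\hookrightarrow R$, invoking Proposition \ref{tt} where its hypotheses apply and doing a short element chase otherwise. The only genuinely delicate point is keeping straight that the nilradical of the quotient is $\sqrt{0_{R/J}}=\sqrt{J}/J$, which in general is strictly larger than $\sqrt{0_R}/J$; this is exactly why (2) needs the extra assumption $J\subseteq\sqrt{0_R}$ while (1) is unconditional.

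For (1), I would first check that $\pi$ is a $\delta\delta_q$-epimorphism: every ideal of $R/J$ is $K/J$ with $J\subseteq K$, and $\pi^{-1}(\delta_q(K/J))=\pi^{-1}(\delta(K)/J)=\delta(K)=\delta(\pi^{-1}(K/J))$. Since $\ker\pi=J\subseteq I$ and $I$ is a $\delta$-$n$-ideal, Proposition \ref{tt}(2) immediately gives that $\pi(I)=I/J$ is a $\delta_q$-$n$-ideal of $R/J$. (Equivalently, directly: if $(a+J)(b+J)\in I/J$ with $a+J\notin\sqrt{0_{R/J}}=\sqrt{J}/J$, then $ab\in I$ and, since $\sqrt{0_R}\subseteq\sqrt{J}$, also $a\notin\sqrt{0_R}$; as $I$ is a $\delta$-$n$-ideal this forces $b\in\delta(I)$, so $b+J\in\delta(I)/J=\delta_q(I/J)$.)

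For (2), Proposition \ref{tt} no longer helps in this direction, so I argue directly. Let $ab\in I$ with $a\notin\sqrt{0_R}$. From $J\subseteq\sqrt{0_R}$ we get $\sqrt{J}=\sqrt{0_R}$, hence $\sqrt{0_{R/J}}=\sqrt{J}/J=\sqrt{0_R}/J$, and $a\notin\sqrt{0_R}$ gives $a+J\notin\sqrt{0_{R/J}}$. Also $(a+J)(b+J)\in I/J$, so since $I/J$ is a $\delta_q$-$n$-ideal we get $b+J\in\delta_q(I/J)=\delta(I)/J$, and therefore $b\in\delta(I)$ because $J\subseteq I\subseteq\delta(I)$. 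Hence $I$ is a $\delta$-$n$-ideal. Part (3) is then immediate: $J$ being a $\delta$-$n$-ideal with $\delta(J)\neq R$ forces $J\subseteq\sqrt{0_R}$ by Proposition \ref{td}, so (2) applies.

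For (4) --- reading $S$ for $K$ and ``of $S$'' for ``of $R$'', with $I$ a $\delta$-$n$-ideal of $R$ --- I would apply Proposition \ref{tt}(1) to the inclusion monomorphism $\iota:S\hookrightarrow R$ equipped with the induced expansion $\delta_S$ on $\mathcal{I}(S)$ (for which $\iota$ is a $\delta_S\delta$-homomorphism): since $S\nsubseteq I$, the ideal $\iota^{-1}(I)=S\cap I$ is proper in $S$, and Proposition \ref{tt}(1) yields that $S\cap I$ is a $\delta_S$-$n$-ideal of $S$. The routine verifications --- that $\pi$ and $\iota$ are morphisms of the stated type, and that cosets behave as claimed --- are the only computations, and I expect no real obstacle beyond the radical bookkeeping flagged above, i.e. the need to distinguish $\sqrt{0_{R/J}}$ from $\sqrt{0_R}/J$.
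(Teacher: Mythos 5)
Your proposal is correct and follows essentially the same route as the paper: part (1) and part (4) via Proposition \ref{tt} applied to the natural surjection and the inclusion respectively, part (2) by a direct coset computation using $\sqrt{J}=\sqrt{0_R}$, and part (3) by reducing to (2) through Proposition \ref{td}. Your version is in fact slightly more careful than the paper's, since you explicitly verify that $\pi$ is a $\delta\delta_q$-epimorphism and justify why $a+J\notin\sqrt{0_{R/J}}$ rather than merely asserting it.
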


\begin{proof}
\begin{enumerate}
\item Consider the natural homomorphism $\pi:R\rightarrow R/J$. By Proposition
\ref{tt} (2), we have $I/J$ is a $\delta_{q}$-$n$-ideal of $R/J$ since
$\ker(\pi)\subseteq I.$

\item Let $I/J $ be a $\delta_{q}$-$n$-ideal of $R/J $ and $J\subseteq
\sqrt{0_{R}}.$ Assume that $ab\in I $ and $a\notin\sqrt{0} $ for some $a,b\in
R. $ Then $ab+J=(a+J)(b+J)\in I/J $ and $a+J\notin\sqrt{0_{R/J}}. $ By our
assumption, $b+J\in\delta_{q}(I/J) =\delta(I)/J$, that is, $b\in\delta(I). $

\item It is clear by (2) and Proposition \ref{td}.

\item Let the injection $i:S\rightarrow R$ be defined with $i(a)=a$ for every
$a\in S.$ Then the proof is clear by Proposition \ref{tt}(1).\newline
\end{enumerate}
\end{proof}

Let $I$ be a proper ideal of\ a ring $R.$ Recall that $I$ is said to be
superfluous if there is no proper ideal $J$ of $R$ such that $I+J=R$. In the
following, by $J(R),$ we denote the Jacobson radical of $R.$

\begin{lemma}
\label{sup}Any $\delta$-$n$-ideal a ring $R$ with $\delta(I)\neq R$ is superfluous.
\end{lemma}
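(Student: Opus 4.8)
The plan is to show that any $\delta$-$n$-ideal $I$ with $\delta(I) \neq R$ satisfies $I \subseteq J(R)$, since an ideal contained in the Jacobson radical is automatically superfluous (if $I + K = R$ with $K$ proper, pick a maximal ideal $M \supseteq K$; then $I \subseteq J(R) \subseteq M$ forces $R = I + K \subseteq M$, a contradiction). So the real content is to verify $I \subseteq J(R)$.

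To do this, recall from Proposition \ref{td} that $\delta(I) \neq R$ together with $I$ being a $\delta$-$n$-ideal gives $I \subseteq \sqrt{0}$. Hence it suffices to note that the nilradical is always contained in the Jacobson radical: every nilpotent element lies in every prime ideal, hence in every maximal ideal, so $\sqrt{0} \subseteq J(R)$. Chaining these inclusions yields $I \subseteq \sqrt{0} \subseteq J(R)$, and superfluousness follows from the standard argument above.

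So the proof is essentially: apply Proposition \ref{td} to get $I \subseteq \sqrt{0}$, then use $\sqrt{0} \subseteq J(R)$, then invoke the characterization of superfluous ideals as those contained in $J(R)$ (or just argue directly with a maximal ideal containing a hypothetical proper complement). I would write it in two or three sentences. There is no real obstacle here; the only thing to be slightly careful about is whether the paper wants the direct argument with maximal ideals spelled out or is content to cite the well-known fact that ideals inside $J(R)$ are superfluous — I would spell out the one-line direct argument to keep the proof self-contained, since $J(R)$ was only just introduced in the preceding sentence.
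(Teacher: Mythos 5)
Your proof is correct and follows essentially the same route as the paper: both reduce the statement to $I\subseteq\sqrt{0}\subseteq J(R)$ via Proposition \ref{td} and then conclude superfluousness by a standard one-line argument (the paper writes $1=a+b$ and observes $b$ is a unit since $a$ is nilpotent, while you pass to a maximal ideal containing the hypothetical complement — an immaterial difference).
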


\begin{proof}
Let $I$ be a $\delta$-$n$-ideal of $R$ with $\delta(I)\neq R.$ Assume that
there exists a proper ideal $J$ of $R$ with $I+J=R$. Then $1=a+b$ for some
$a\in I$ and $b\in J$ and so $1-b\in I\subseteq\sqrt{0}\subseteq J(R)$ by
Proposition \ref{td}. Thus $b\in J$ is a unit and so, we get $J=R,$ a contradiction.
\end{proof}

\begin{proposition}
\label{sum}Let $I$ and $J$ be $\delta$-$n$-ideals of a ring $R$ such that
$\delta(I)\neq R$ and $\delta(J)\neq R$. Then $I+J$ is a $\delta$-$n$-ideal of
$R$.
\end{proposition}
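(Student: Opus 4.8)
The plan is to confine everything to the nilradical and then to exploit nilpotency to reduce a product lying in $I+J$ to a trivial membership in $I$ and in $J$ separately. First I would invoke Proposition \ref{td}: since $I$ and $J$ are $\delta$-$n$-ideals with $\delta(I)\neq R$ and $\delta(J)\neq R$, we get $I\subseteq\sqrt{0}$ and $J\subseteq\sqrt{0}$, hence $I+J\subseteq\sqrt{0}$. As $1\notin\sqrt{0}$, this already shows $I+J$ is a proper ideal, and it also records the key fact driving the rest, namely that every element of $I+J$ is nilpotent.

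Next, take $a,b\in R$ with $ab\in I+J$ and $a\notin\sqrt{0}$; the goal is $b\in\delta(I+J)$. Since $ab\in I+J\subseteq\sqrt{0}$, there is $k\geq 1$ with $(ab)^{k}=0$, i.e. $a^{k}b^{k}=0$. From $a\notin\sqrt{0}$ it follows that $a^{k}\notin\sqrt{0}$. Now $a^{k}b^{k}=0\in I$ and $a^{k}\notin\sqrt{0}$, so the $\delta$-$n$-ideal property of $I$ forces $b^{k}\in\delta(I)$; the identical argument with $J$ in place of $I$ gives $b^{k}\in\delta(J)$. Since $I\subseteq I+J$ and $J\subseteq I+J$, monotonicity of $\delta$ yields $\delta(I)\subseteq\delta(I+J)$ and $\delta(J)\subseteq\delta(I+J)$, so $b^{k}\in\delta(I)\cap\delta(J)\subseteq\delta(I+J)$.

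The remaining step, passing from $b^{k}\in\delta(I+J)$ to $b\in\delta(I+J)$, is the one I expect to be the real obstacle. It is immediate whenever $\delta(I+J)$ is a radical ideal: for instance if $\delta=\delta_{1}$, then $\delta_{1}(I+J)=\sqrt{I+J}$, and $b^{k}\in\sqrt{I+J}$ gives $b\in\sqrt{I+J}$ at once, completing the argument in that case. For an arbitrary expansion $\delta$, however, there is no reason for $\delta(I+J)$ to be radical, and the power-extraction above only delivers a power of $b$ inside $\delta(I+J)$; at this point one seems to need an additional hypothesis on $\delta$ (for example that $\delta(K)$ is radical for every ideal $K$, or that $\delta$ commutes with radicals as in Proposition \ref{t5}) or a genuinely new idea. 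Accordingly, I would expect the proof above to be complete precisely for radical-type expansions, with $\delta_{1}$ the case of primary interest.
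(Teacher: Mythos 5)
Your argument is correct as far as it goes, but the gap you flag at the end is a genuine one, and it means the proposal does not prove the proposition as stated: you only ever obtain $b^{k}\in\delta(I+J)$, and for an arbitrary expansion $\delta$ there is no way to descend from $b^{k}$ to $b$. So what you have is a complete proof that $I+J$ is a \emph{quasi} $n$-ideal when $I$ and $J$ are (more generally, whenever $\delta(K)$ is always a radical ideal), together with the correct and useful observations that $I+J\subseteq\sqrt{0}$ is proper and that $b^{k}\in\delta(I)\cap\delta(J)\subseteq\delta(I+J)$. The power-extraction step is where your route and the paper's part ways, and it cannot be patched within your framework.

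The paper avoids powers entirely. It first shows (Lemma \ref{sup}) that a $\delta$-$n$-ideal with $\delta(I)\neq R$ is superfluous, which gives $I+J\neq R$; it then observes that $I\cap J$ is a $\delta$-$n$-ideal (Proposition \ref{int}), passes to the quotient via Corollary \ref{/}(1) to see that $I/(I\cap J)$ is a $\delta_{q}$-$n$-ideal, identifies $I/(I\cap J)$ with $(I+J)/J$ by the second isomorphism theorem so that $(I+J)/J$ is a $\delta_{q}$-$n$-ideal of $R/J$, and finally lifts back along $R\rightarrow R/J$ using Corollary \ref{/}(3) and the hypothesis that $J$ is a $\delta$-$n$-ideal with $\delta(J)\neq R$. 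Note, however, that this route is not free of hidden cost either: Proposition \ref{int} assumes $\delta$ is intersection preserving, a hypothesis absent from the statement of Proposition \ref{sum} (and the transfer across the isomorphism $I/(I\cap J)\cong(I+J)/J$ changes the ambient ring). So the obstruction you ran into is a real feature of the problem for general $\delta$, not merely a missed trick; if you want a clean self-contained statement, either add the intersection-preserving hypothesis and follow the quotient route, or restrict to radical-type expansions and keep your direct argument.
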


\begin{proof}
Let $I$ and $J$ be $\delta$-$n$-ideals of a ring $R$ such that $\delta(I)\neq
R$ and $\delta(J)\neq R.$ Since they are superfluous by Lemma \ref{sup},
$I+J\neq R$. Hence, $I\cap J$ is a $\delta$-$n$-ideal by Proposition
\ref{int}. Also, $I/(I\cap J)$ is a $\delta_{q}$-$n$-ideal of $R/(I\cap J)$ by
Corollary \ref{/} (1). Now, by the isomorphism $I/(I\cap J)\cong(I+J)/J$,
$(I+J)/J$ is a $\delta_{q}$-$n$-ideal of $R/J$. Therefore, Corollary \ref{/}
(3) implies that $I+J$ is a $\delta$-$n$-ideal of $R$.
\end{proof}

Let $S$ be a multiplicatively closed subset of $R$. Note that $\delta_{S}$ is
an expansion function of $\mathcal{I(S}^{-1}\mathcal{R)\ }$such that
$\delta_{S}(S^{-1}I)=S^{-1}(\delta(I))$ where $\delta$ is an expansion
function of $R.$ By $Z_{I}(R),$ we\ denote the set of $\{r\in R|rs\in I$ for
some $s\in R\backslash I\}$ where $I$ is a proper ideal of $R.$

\begin{proposition}
\label{loc}Let $S$ be a multiplicatively closed subset of $R$ and $\delta$ an
expansion function of $R.$
\end{proposition}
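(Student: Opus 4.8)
The statement is set up exactly like the usual localization–transfer lemmas for ideals of this type, so I expect it to assert (roughly) that $S^{-1}I$ is a $\delta_{S}$-$n$-ideal of $S^{-1}R$ whenever $I$ is a $\delta$-$n$-ideal of $R$ disjoint from $S$, and conversely that $I$ is a $\delta$-$n$-ideal of $R$ whenever $S^{-1}I$ is one and $S$ avoids the relevant zero-divisor set $Z_{\sqrt{0}}(R)$. The plan is to push everything through the two identities recalled just before the statement, $\sqrt{0_{S^{-1}R}}=S^{-1}\sqrt{0_{R}}$ and $\delta_{S}(S^{-1}I)=S^{-1}(\delta(I))$, so that membership in a nilradical or in $\delta_{S}$ of a localized ideal can always be tested back in $R$ after clearing one denominator.

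For the forward direction, given $(a/s)(b/t)\in S^{-1}I$ with $a/s\notin\sqrt{0_{S^{-1}R}}$, clearing denominators produces $u\in S$ with $uab\in I$. The key observation is that $a/s\notin S^{-1}\sqrt{0_{R}}$ forces $va\notin\sqrt{0_{R}}$ for every $v\in S$, in particular $ua\notin\sqrt{0_{R}}$; then applying the $\delta$-$n$-ideal hypothesis of $I$ to $(ua)b\in I$ yields $b\in\delta(I)$, hence $b/t\in S^{-1}\delta(I)=\delta_{S}(S^{-1}I)$, and $S^{-1}I$ is a $\delta_{S}$-$n$-ideal. Properness of $S^{-1}I$ is exactly $S\cap I=\emptyset$; note that when $\delta(I)\neq R$ one has $I\subseteq\sqrt{0}$ by Proposition \ref{td}, so properness already follows once $S\cap\sqrt{0}=\emptyset$, i.e. once $S^{-1}R\neq 0$.

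For the converse I would start from $ab\in I$ with $a\notin\sqrt{0_{R}}$ and first verify that $a/1\notin\sqrt{0_{S^{-1}R}}$: otherwise some $t\in S$ satisfies $ta\in\sqrt{0_{R}}$ while $a\notin\sqrt{0_{R}}$, which puts $t\in Z_{\sqrt{0}}(R)\cap S$ after the routine check that $Z_{\sqrt{0}}(R)$ is closed under taking powers (if $t^{j}s\in\sqrt{0}$ with $j\geq 1$ minimal and $s\notin\sqrt{0}$, then either $j=1$ or $t\cdot(t^{j-1}s)\in\sqrt{0}$ with $t^{j-1}s\notin\sqrt{0}$), contradicting the disjointness hypothesis. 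Then $(a/1)(b/1)\in S^{-1}I$ with $a/1\notin\sqrt{0_{S^{-1}R}}$ gives $b/1\in\delta_{S}(S^{-1}I)=S^{-1}\delta(I)$, so $ub\in\delta(I)$ for some $u\in S$. The last step, passing from $ub\in\delta(I)$ to $b\in\delta(I)$, is the one I expect to be the real obstacle: for a general expansion $\delta$ there is no reason $\delta(I)$ is $S$-saturated, which is presumably why the statement carries its extra hypothesis, and in the clean particular case $\delta=\delta_{1}$ it goes through because $\sqrt{S^{-1}I}\cap R=\sqrt{I}$ whenever $S\cap\sqrt{I}=\emptyset$, so one gets $b\in\sqrt{I}=\delta_{1}(I)$ immediately. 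Finally I would reconcile the properness bookkeeping ($I\neq R$ and $S\cap I=\emptyset$) in whatever precise form the two parts are stated, and record the particular-case corollary for quasi $n$-ideals.
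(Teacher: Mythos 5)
Your reconstruction matches the paper's statement and proof essentially step for step: part (1) clears a denominator to get $uab\in I$ and applies the $\delta$-$n$-ideal property (the paper concludes $bu\in\delta(I)$ and writes $b/t=bu/(tu)$, while you conclude $b\in\delta(I)$ directly from $ua\notin\sqrt{0}$ --- an immaterial rearrangement), and part (2) passes to $a/1,b/1$ and uses exactly the saturation hypotheses you anticipated, namely $S\cap Z(R)=\emptyset$ to descend from $\sqrt{0_{S^{-1}R}}$ and $S\cap Z_{\delta(I)}(R)=\emptyset$ to get from $ub\in\delta(I)$ to $b\in\delta(I)$, which you correctly flagged as the crux. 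The proposal is correct and takes the same route as the paper.
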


\begin{enumerate}
\item If $I$ is a $\delta$-$n$-ideal of $R$ with $I\cap S=\emptyset,$ then
$S^{-1}I$ is a $\delta_{S}$-$n$-ideal of $S^{-1}R.$

\item Let $S\cap Z(R)=S\cap Z_{\delta(I)}(R)=\emptyset$. If $S^{-1}I$ is a
$\delta_{S}$-$n$-ideal of $S^{-1}R,$ then $I$ is a $\delta$-$n$-ideal of $R.$
\end{enumerate}

\begin{proof}
(1) Suppose that $\frac{a}{s}\frac{b}{t}\in S^{-1}I$ and $\frac{a}{s}%
\notin\sqrt{0_{S^{-1}R}}$ for some $a,b\in R$ and $s,t\in S.$ Then there is
$u\in S$ with $abu\in I.$ Thus $bu\in\delta(I)$ since $a\notin\sqrt{0}.$ Hence
$\frac{b}{t}=\frac{bu}{tu}\in S^{-1}(\delta(I))=\delta_{S}(S^{-1}I)$.
Consequently, $S^{-1}I$ is a $\delta_{S}$-$n$-ideal of $S^{-1}R.$

(2) Let $a,b\in R$ with $ab\in I$. Then $\frac{a}{1}\frac{b}{1}\in S^{-1}I$
implies that either $\frac{a}{1}\in\sqrt{0_{S^{-1}R}}$ or $\frac{b}{1}%
\in\delta_{S}(S^{-1}I)$. If $\frac{a}{1}\in\sqrt{0_{S^{-1}R}}$, then
$ua^{n}=0$ for some $u\in S$ and a positive integer $n.$ Since $S\cap
Z(R)=\emptyset$, we conclude $a^{n}=0$ and $a\in\sqrt{0}.$ If $\frac{b}{1}%
\in\delta_{S}(S^{-1}I)=S^{-1}(\delta(I))$, then $vb\in\delta(I)$ for some
$v\in S.$ Our assumption $S\cap Z_{\delta(I)}(R)=\emptyset$ implies that
$b\in\delta(I)$, as needed.
\end{proof}

An element $a\in R$ is called regular if $ann(a)=0.$ Let $r(R)$ be the set of
all regular elements of $R.$ Note that $r(R)$ is a multiplicatively closed
subset of $R$. From \cite[Proposition 2.20]{Tekir}, we obtain that if $I$ is a
$\delta_{r(R)}$-$n$-ideal of $R_{r(R)}$, then $I^{c}$ is $\delta$-$n$-ideal of
$R$.\newline

\begin{remark}
\label{di}Let $R=R_{1}\times R_{2}$ be a commutative ring where $R_{i}$ is a
commutative ring with nonzero identity for each $i\in\{1,2\}.$ Every ideal $I$
of $R$ is the form of $I=I_{1}\times I_{2}$ where $I_{i}$ is an ideal of
$R_{i}$ for all $i\in\{1,2\}.$ Let $\delta_{i}\ $be an expansion function of
$\mathcal{I(R}_{i}\mathcal{)\ }$for each $i\in\{1,2\}.$ Let $\delta_{\times}$
be a function of $\mathcal{I(R)}$, which is defined by $\delta_{\times}%
(I_{1}\times I_{2})=\delta_{1}(I_{1})\times\delta_{2}(I_{2})\ $. Then
$\delta_{\times}$ is an expansion function of $\mathcal{I(R)}$. If $\delta
_{i}(I_{i})\neq R_{i}$ for some $i\in\{1,2\}$, then $R$ has no a
$\delta_{\times}$-$n$-ideal. Suppose that $I=I_{1}\times I_{2}$ is a
$\delta_{\times}$-$n$-ideal of $R$ where $I_{i}$ is an ideal of $R_{i}$ for
$i\in\{1,2\}.$ As $(1,0)(0,1)\in I$ and $(1,0),(0,1)\notin\sqrt{0_{R}},$ then
we have $(1,0),(0,1)\in\delta_{\times}(I)$. Thus $\delta_{\times}%
(I)=\delta_{1}(I_{1})\times\delta_{2}(I_{2})=R_{1}\times R_{2},$ a
contradiction.\newline
\end{remark}

Let $R(+)M$ be the idealization where $M$ is an $R$-module. For an expansion
function $\delta$ of $R,$ define $\delta_{(+)}$ as $\delta_{(+)}%
(I(+)N)=\delta(I)(+)M$ for some ideal $I(+)N\ $of $R(+)M$. It is clear that
$\delta_{(+)}$ is an expansion function of $R(+)M.$ Next, we characterize
$\delta$-$n$-ideals in any idealization ring $R(+)M$.

\begin{proposition}
\label{i}Let $I$ be an ideal of of a ring $R$ and $N$ be a submodule of an
$R$-module $M$. Then $I$ is a $\delta$-$n$-ideal of $R$ if and only if $I(+)N$
is a $\delta_{(+)}$-$n$-ideal of $R(+)M.$
\end{proposition}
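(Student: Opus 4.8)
The plan is to prove the biconditional by unwinding the definitions in the idealization ring $R(+)M$ and using two basic facts recalled in the introduction, namely that $\sqrt{I(+)N}=\sqrt{I}(+)M$ (so in particular $\sqrt{0_{R(+)M}}=\sqrt{0_R}(+)M$) and that $\delta_{(+)}(I(+)N)=\delta(I)(+)M$. First I would handle the forward direction. Assume $I$ is a $\delta$-$n$-ideal of $R$; in particular $I$ is proper, so $I(+)N$ is a proper ideal of $R(+)M$. Take $(a,m),(b,m')\in R(+)M$ with $(a,m)(b,m')=(ab,\,am'+bm)\in I(+)N$ and $(a,m)\notin\sqrt{0_{R(+)M}}=\sqrt{0_R}(+)M$; the latter forces $a\notin\sqrt{0_R}$. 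From $ab\in I$ and $a\notin\sqrt{0_R}$ the hypothesis gives $b\in\delta(I)$, hence $(b,m')\in\delta(I)(+)M=\delta_{(+)}(I(+)N)$. That is exactly what is needed, so $I(+)N$ is a $\delta_{(+)}$-$n$-ideal.

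For the converse, assume $I(+)N$ is a $\delta_{(+)}$-$n$-ideal of $R(+)M$; then $I(+)N\neq R(+)M$, so $I\neq R$ is proper. Suppose $a,b\in R$ with $ab\in I$ and $a\notin\sqrt{0_R}$. Lift these to $(a,0),(b,0)\in R(+)M$ and observe $(a,0)(b,0)=(ab,0)\in I(+)N$ and $(a,0)\notin\sqrt{0_R}(+)M=\sqrt{0_{R(+)M}}$ because $a\notin\sqrt{0_R}$. The hypothesis then yields $(b,0)\in\delta_{(+)}(I(+)N)=\delta(I)(+)M$, whose first coordinate set is $\delta(I)$, so $b\in\delta(I)$. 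Hence $I$ is a $\delta$-$n$-ideal of $R$.

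The only mildly delicate points are bookkeeping ones rather than genuine obstacles: one must note that $I(+)N$ being an ideal of $R(+)M$ presupposes $IM\subseteq N$ (as recalled from \cite{AnWi,Huc}), and one should be careful that properness transfers correctly in both directions (a proper $I$ gives a proper $I(+)N$, and conversely). Everything else is a direct translation through the two identities $\sqrt{0_{R(+)M}}=\sqrt{0_R}(+)M$ and $\delta_{(+)}(I(+)N)=\delta(I)(+)M$; in particular the module components $m,m'$ play no role, since they sit inside the full module $M$ appearing in both $\sqrt{0_{R(+)M}}$ and $\delta_{(+)}(I(+)N)$. So I expect no real obstacle; the proof is a short verification in both directions.
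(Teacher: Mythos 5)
Your proof is correct and follows essentially the same route as the paper: both directions are a direct unwinding of the definitions via the identities $\sqrt{0_{R(+)M}}=\sqrt{0_R}(+)M$ and $\delta_{(+)}(I(+)N)=\delta(I)(+)M$, with the converse obtained by lifting $a,b$ to $(a,0),(b,0)$. If anything, your write-up is slightly more careful than the paper's (which swaps the roles of $r$ and $s$ at one point and does not specify the module components in the converse), but the argument is the same.
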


\begin{proof}
Let $I$ be a $\delta$-$n$-ideal of $R$. Assume that $(r,m)(s,m^{\prime})\in
I(+)N$ and $(s,m^{\prime})\notin\sqrt{0}(+)M$ for some $(r,m)(s,m^{\prime})\in
R(+)M.$ Then $s\in\delta(I)$ since $rs\in I$ and $s\notin\sqrt{0}$. Thus
$(s,m^{\prime})\in\delta(I)(+)M=\delta_{(+)}(I(+)M).$ Conversely, \ suppose
that $I(+)N$ is a $\delta_{(+)}$-$n$-ideal of $R(+)M.$ Let $r,s\in R$ with
$rs\in I$ and $s\notin\sqrt{0}$. Hence, we get $(r,m)(s,m^{\prime})\in I(+)N$
and clearly $(s,m^{\prime})\notin\sqrt{0}(+)M$ which follows $(r,m)\in
\delta_{(+)}(I(+)M)$, and $r\in\delta(I)$, as required.
\end{proof}

\end{document}